\documentclass[10pt]{amsart}
\usepackage{graphicx, mathabx, amssymb,amsfonts,amsmath,amsthm,newlfont}
\usepackage{epsfig,url}
\usepackage[usenames,dvipsnames]{color}
\usepackage{enumerate}
\usepackage[colorlinks=true,linkcolor=red,citecolor=blue]{hyperref}
\usepackage{color}
\usepackage{young}

\usepackage[all,2cell]{xy} \UseAllTwocells \SilentMatrices

\newtheorem{thm}{Theorem}[section]

\newtheorem{lem}[thm]{Lemma}

\theoremstyle{definition}
\newtheorem{defn}[thm]{Definition}
\newtheorem{notation}[thm]{Notation}

\newtheorem{example}[thm]{Example}

\theoremstyle{remark}
\newtheorem{rem}[thm]{Remark}

\numberwithin{equation}{section}

\newcommand{\To}{\longrightarrow}

\newcommand{\Z}{\mathbb Z}
\newcommand{\Q}{\mathbb Q}

\newcommand{\C}{\mathbb C}

\newcommand{\z}{\underline{z}}

\newcommand{\lf}{\langle}
\newcommand{\rf}{\rangle}

\newcommand{\bwedge}{\mathsf{\Lambda}}
\newcommand{\Sym}{\mathrm{Sym}}

\begin{document}

\title{Multivariable Vandermonde determinants, amalgams of matrices  and  Specht modules}
\author{Francis Brown}
\maketitle

\begin{abstract} Using results of Fayers on the structure of Specht modules, we prove two different formulae for the determinant of   matrices which are obtained by amalgamating the entries of two smaller matrices. In particular,  this gives formulae for multivariable Vandermonde determinants as a sum of completely factorising terms, each of which is a Vandermonde determinant in fewer variables.  As an application, we deduce  an elementary proof of the multiplicativity of the transfinite diameter for products of compact sets. 
\end{abstract}

\section{Introduction}
It is a very well-known and  often used fact that the determinant of the  classical Vandermonde matrix completely factorises into linear factors. This, however, fails  for Vandermonde determinants in several variables. In this paper, we prove two formulae for  multivariable Vandermonde determinants  as a \emph{sum  of terms}   which factorise into a product of Vandermonde determinants in fewer variables.  Each such term  separates variables into distinct factors,  and, in some cases, completely factorises  into linear terms.

Our formulae are deduced from  a more general determinantal identity for an `amalgamated product' of matrices, which is obtained by taking the Kronecker tensor products of their rows.  The proof uses  Schur-Weyl duality and the representation theory of the symmetric group. 

The identites obtained in this paper have  possible applications  to  potential theory, to multivariate interpolation in approximation  theory, and to classical problems in algebraic geometry.
In particular, we use these identities to give a simple proof of the multiplicativity of the transfinite diameter.

\section{Multivariable Vandermonde matrices} \label{sect: MultivariableVDM}

\subsection{Definition}
Let  $r\geq 1$ and let $\underline{N}=(N_1,\ldots, N_r)$ denote an ordered  sequence of integers $N_i \geq 2$. Set $m= N_1N_2 \ldots N_r$.
 Consider the ordered set 
 \[ E_N= ( e_1(\z) \ , \  e_2(\z) \ , \  \ldots  \ , \  e_{m}(\z) ) \]
  of monomials in $r$ variables $\z=(z^{(1)},\ldots, z^{(r)})$, which are of degree $\leq N_i-1$ in each component $z^{(i)}$, and  ordered degree-lexicographically. By this we mean that they are to be ordered first with respect to the total degree, and then within each degree by the lexicographic order.  
  Given $m$ such vectors $\z_1,\ldots, \z_m$, each of length $r$,  we shall define the  multivariable Vandermonde matrix  to be
\begin{equation}  \label{defn: VNmatrix}  V_{\underline{N}} (\z_1,\ldots, \z_m) =  (e_j(\z_i) )\ . \end{equation}
It is a square $m\times m$ matrix.  Note that this is one of several possible definitions for a multivariable Vandermonde matrix: some authors replace the set $E_N$ with the set of all monomials of bounded total degree (see \S\ref{sect: Homogeneous}).

\begin{example}When $r=1$, we may write $\z_i= (z_i)$ and  $E_N= (1, z, z^2, \ldots, z^{N-1})$. We retrieve the classical Vandermonde matrix
\[ V_{\underline{N}}  ( z_1,\ldots, z_N) = \begin{pmatrix} 
1 & z_1 & z_1^2 &   \ldots & z_1^{N-1}  \\
1 & z_2 & z_2^2 &   \ldots & z_2^{N-1}  \\
\vdots  & \vdots&   \vdots &   \ddots &  \vdots \\
1 & z_N & z_N^2 &   \ldots & z_N^{N-1}  
\end{pmatrix} \ . \]
As is  well known, its determinant is given by the formula
\begin{equation}  \label{detVandermonde1dimension} 
\det V_N(z_1,\ldots, z_N) =   \prod_{1\leq i<j\leq N}  (z_j - z_i) \ . 
\end{equation} 
This can be   deduced from the fact that the determinant vanishes when any two rows are equal, i.e., when  $z_i=z_j$. 
\end{example} 
The literature laments the fact that the determinant of the multivariable version of the Vandermonde matrix does not factorise, which is perfectly true. However, one notices that its determinant certainly vanishes when $\z_i=\z_j$ agree in all components, which defines  a condition of codimension $r$.  This fact suggests that there must exist  some generalisation of 
\eqref{detVandermonde1dimension}, and indeed, in this note we shall prove two such formulae which express the determinant $\det V_{\underline{N}}(\z_1,\ldots, \z_m)$ in terms of determinants involving strictly fewer variables.

\begin{example} \label{intro: example22}
Consider a simple case when $r=2$.  Let  $\underline{N}=(2,2)$, $m=4$  and write $\z_i= (z_i^{(1)}, z_i^{(2)})$. Then $E_{\underline{N}} = ( 1, z^{(1)}, z^{(2)}, z^{(1)} z^{(2)})$ and  
\[V_{(2,2)}(\z_1,\ldots \z_4)= \begin{pmatrix} 1 & z^{(1)}_1 & z^{(2)}_1 & z^{(1)}_1 z^{(2)}_1 \nonumber\\ 
1 & z^{(1)}_2 & z^{(2)}_2 & z^{(1)}_2 z^{(2)}_2  \nonumber\\ 
1 & z^{(1)}_3 & z^{(2)}_3 & z^{(1)}_3 z^{(2)}_3  \nonumber\\ 
1 & z^{(1)}_4 & z^{(2)}_4 & z^{(1)}_4 z^{(2)}_4  \nonumber\\ 
\end{pmatrix}  \  .
\]
In this case, our first formula states that: 
\begin{eqnarray}  \det V_{(2,2)}(\z_1,\ldots, \z_4)  &=&   \det V_1(z^{(1)}_1,z^{(1)}_2) \det V_1(z^{(1)}_3,z^{(1)}_4)  \nonumber  \\
&& \qquad \qquad \qquad \qquad \qquad \times \,   \det V_1(z^{(2)}_1,z^{(2)}_3)  \det V_1(z^{(2)}_2,z^{(2)}_4) \nonumber \\
&&\quad  - \quad     \det V_1(z^{(1)}_1,z^{(1)}_3) \det V_1(z^{(1)}_2,z^{(1)}_4)  \nonumber \\
&& \qquad \qquad \qquad \qquad \qquad \times \,    \det V_1(z^{(2)}_1,z^{(2)}_2) \det V_1(z^{(2)}_3,z^{(2)}_4)  \nonumber\\
& = & \det \begin{pmatrix} 1   &  z^{(1)}_1  \\ 1 & z^{(1)}_2  \end{pmatrix}\det \begin{pmatrix} 1   &  z^{(1)}_3  \\ 1 & z^{(1)}_4  \end{pmatrix}   \det \begin{pmatrix} 1   &  z^{(2)}_1  \\ 1 & z^{(2)}_3  \end{pmatrix} \det \begin{pmatrix} 1   &  z^{(2)}_2  \\ 1 & z^{(2)}_4  \end{pmatrix} \nonumber \\ 
 &&  \quad  - \quad      \det \begin{pmatrix} 1   &  z^{(1)}_1  \\ 1 & z^{(1)}_3  \end{pmatrix}\det \begin{pmatrix} 1   &  z^{(1)}_2  \\ 1 & z^{(1)}_4  \end{pmatrix}  \det \begin{pmatrix} 1   &  z^{(2)}_1  \\ 1 & z^{(2)}_2  \end{pmatrix} \det \begin{pmatrix} 1   &  z^{(2)}_3  \\ 1 & z^{(2)}_4  \end{pmatrix} \nonumber .
 \end{eqnarray} 
 For the convenience of the reader, we may write this in more readable form by relabelling the variables, giving the identity: 
 \begin{multline} \det  \begin{pmatrix} 1 & x_1 & y_1 & x_1 y_1 \nonumber\\ 
1 & x_2 & y_2 & x_2 y_2  \nonumber\\ 
1 & x_3 & y_3 & x_3 y_3  \nonumber\\ 
1 & x_4 & y_4 & x_4 y_4  \nonumber\\ 
\end{pmatrix} 
=     (x_2-x_1)  (x_4-x_3)  (y_3-y_1)  (y_4-y_2)   \\
 -   (x_3-x_1)  (x_4-x_2)  (y_2-y_1)  (y_4-y_3)    \ . 
 \end{multline}
One may check that, indeed, if any two rows are equal then it vanishes. 
See example \ref{example:V32} for the case $N=(3,2)$.

The second formula which we prove also expresses $V_N$ as a linear combination of completely factorising terms in separate sets of variables. It  is more symmetric than the first,  but involves a greater number of terms.  In this case it states that 
\begin{eqnarray}  \det V_{(2,2)}  &= & \frac{1}{12} \sum_{\sigma \in \Sigma_4}  \varepsilon(\sigma)  \, \det V_1(z^{(1)}_{\sigma_1},z^{(1)}_{\sigma_2}) \det  V_1(z^{(1)}_{\sigma_3},z^{(1)}_{\sigma_4}) \nonumber \\
&&  \qquad \qquad \qquad \qquad \qquad \times \,  \det V_1(z^{(2)}_{\sigma_1},z^{(2)}_{\sigma_3}) \det V_1(z^{(2)}_{\sigma_2},z^{(2)}_{\sigma_4}) \nonumber \\
& = &    \frac{1}{12} \sum_{\sigma \in \Sigma_4}  \varepsilon(\sigma)  \, (z^{(1)}_{\sigma_2} -  z^{(1)}_{\sigma_1}) (z^{(1)}_{\sigma_4}- z^{(1)}_{\sigma_3}) \times  (z^{(2)}_{\sigma_3}- z^{(2)}_{\sigma_1}) (z^{(2)}_{\sigma_4}- z^{(2)}_{\sigma_2})   \nonumber  \end{eqnarray}   
 where the symmetric group $\Sigma_4$ acts on the four subscripts $\{1,2,3,4\}$ of the $\z_i$, and $\varepsilon(\sigma)$ is the sign of a permutation $\sigma$.

\end{example}

\subsection{First formula  for Vandermonde determinants} \label{sect: firstformula} 
We may express the determinant of the Vandermonde matrix \eqref{defn: VNmatrix} in $r$ variables as a sum of terms  which factorise into a product of Vandermonde determinants in $k$ and $r-k$ variables. 

 Let $N_1,\ldots, N_r\geq 2$ and let  $1\leq k< r$. 
Let 
\[ m= N_1\ldots N_k  \quad  \hbox{ and  }\quad  n= N_{k+1} \ldots N_r\ \]
so that $mn = N_1\ldots N_r$ is the rank of $V_{(N_1,\ldots, N_r)}$.
Consider $mn$ vectors  $\z_1,\ldots, \z_{mn}$ of  length $r$. 
 The coordinates of each variable $\z_i$  may be partitioned into two vectors
 \[  \z_i = (\z'_i, \z_i'') 
 \]
of length $k$ and $r-k$ respectively, where 
\[ \z'_i = (z_i^{(1)}, \ldots, z_i^{(k)}) \quad \hbox{ and } \quad \z''_i = (z_i^{(k+1)}, \ldots, z_i^{(r)})\ .
\]
We shall express $V_{N_1,\ldots, N_r}(\z_1,\ldots, \z_{mn})$ in terms of products of  $V_{N_1,\ldots,N_k}(\z'_i, i\in I)$ and 
  $V_{N_{k+1},\ldots,N_r}(\z''_j, j\in J)$, where $I$ and $J$ range over subsets of $\{1,\ldots, mn\}$.

In order to state the formula, recall that a standard Young tableau of shape $n^m=(n,\ldots,n)$ is a square array with $m$ rows and $n$ columns  of boxes in which the numbers $1,\ldots, mn$ are placed in such a way that every row and column is increasing when read from left to right, and top to bottom, respectively. For instance, the trivial tableau is as follows:
\vspace{0.05in}
\[ 
1_{n^m} = \begin{array}{|c|c|c|c|c|} \hline
 1 &  m+1 & \ldots & m(n-2)+1 &  m(n-1)+1    \\  \hline
 2 & m+2  & \ldots &   m (n-2)+2 &   m(n-1)+2 \\ \hline
\vdots   &   \vdots  &    &  \vdots& \vdots    \\ \hline
m  &  2m & \ldots        & m(n-1) & mn  \\ \hline
\end{array}
\] 
\vspace{0.05in}

Our first  formula for $\det V_{\underline{N}}$ is not completely explicit but is  relatively parsimonious. It involves summing over pairs of  standard tableaux. 
\begin{thm} \label{thm: FirstVDMformula} The multivariable Vandermonde determinant satisfies:
\begin{multline}  \det V_{(N_1,\ldots, N_r)} (\z_1,\ldots , \z_{mn}) =   \label{Intro:FirstVDMformula} \\
 \sum_{\alpha,\beta}  \Phi_{\alpha,\beta} \prod_{i=1}^n  \det V_{(N_1,\ldots, N_k)} (z'_k, k\in \mathrm{col}_i(\alpha))  \prod_{j=1}^m  \det V_{(N_{k+1},\ldots, N_r)} (z''_k, k\in \mathrm{row}_j(\beta))   \end{multline}
 where the sum is over all pairs of standard tableaux $\alpha,\beta$ of shape $n^m$, and the $\Phi_{\alpha,\beta}$ are certain integers. The notation $\mathrm{col}_i(\alpha)$ denotes the set of elements in the $i^{\mathrm{th}}$ column of $\alpha$, in ascending order, and likewise $\mathrm{row}_j(\beta)$ denotes the $j^{\mathrm{th}}$ row of $\beta$.
 \end{thm}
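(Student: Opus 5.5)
Write $M=mn$. The strategy is to view $\det V_{(N_1,\ldots,N_r)}$ as a vector in a space of multilinear forms and use Specht module theory to decompose it.

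\textbf{Setting up the tensor product.} The ordered monomial set $E_{(N_1,\ldots,N_r)}$ is, up to reordering, the set of products $e'_a(\z')e''_b(\z'')$. Here $e'_a$ runs over $E_{(N_1,\ldots,N_k)}$ and $e''_b$ over $E_{(N_{k+1},\ldots,N_r)}$. So each row of $V_{\underline N}(\z_1,\ldots,\z_M)$ is the Kronecker product of the row $(e'_a(\z'_i))_a$ with the row $(e''_b(\z''_i))_b$, with columns permuted. Reordering columns only changes the determinant by a sign $\pm1$, which can be absorbed into the $\Phi_{\alpha,\beta}$.

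Let $A=\C^m$ and $B=\C^n$. Consider $D(\z)=\det V_{\underline N}$ as a function of the rows $u_i\otimes v_i\in A\otimes B$, where $u_i$ is the row $(e'_a(\z'_i))_a$ and $v_i$ the row $(e''_b(\z''_i))_b$. Then $D$ is the restriction of the alternating $M$-form on $A\otimes B$, i.e. the generator of $\bwedge^{M}(A\otimes B)^\vee$, a line. Via $(A\otimes B)^{\otimes M}=A^{\otimes M}\otimes B^{\otimes M}$, we regard $D$ as an element of $(A^{\vee})^{\otimes M}\otimes (B^{\vee})^{\otimes M}$ on which $\Sigma_M$ acts diagonally by the sign character.

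\textbf{Locating $D$ in a Specht module.} By Schur--Weyl duality,
\[ (A^{\vee})^{\otimes M}\otimes (B^{\vee})^{\otimes M}=\bigoplus_{\lambda,\mu} S_\lambda A^\vee\otimes S_\mu B^\vee\otimes S^\lambda\otimes S^\mu . \]
The sign isotypic part of $S^\lambda\otimes S^\mu$ is nonzero only if $\mu=\lambda'$. On the other hand, $\bwedge^{M}(A\otimes B)$ is one-dimensional and $GL(A)\times GL(B)$ acts on it by $\det_A^{\,n}\det_B^{\,m}$ (Cauchy formula). Therefore $\lambda$ must be the rectangle with $n$ columns of height $m$, and $\mu=\lambda'$ the rectangle with $m$ columns of height $n$. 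As a $\Sigma_M$-representation, $D$ therefore lies in $S^{\lambda}\otimes S^{\lambda'}$. Here $S^\lambda\subset(A^\vee)^{\otimes M}$ is realised inside the $\det_A^{\,n}$-isotypic part, and $S^{\lambda'}\subset(B^\vee)^{\otimes M}$ inside the $\det_B^{\,m}$-isotypic part.

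\textbf{Bases from polytabloids.} In the first factor, the polytabloid basis of $S^{\lambda}$ consists of the functions
\[ \alpha\mapsto \prod_{i=1}^n \det\bigl(u_k\bigr)_{k\in \mathrm{col}_i(\alpha)}=\prod_i\det V_{(N_1,\ldots,N_k)}(\z'_k,k\in\mathrm{col}_i(\alpha)), \]
for $\alpha$ a standard tableau of shape $n^m$. These are products of $m\times m$ determinants over the columns of $\alpha$, each column having $m$ entries. In the second factor, the vectors $v$ live in $B=\C^n$, so the polytabloid basis uses products of $n\times n$ determinants over the $m$ rows of $\beta$. This is exactly the second product in the formula, indexed by standard $\beta$ of shape $n^m$ (equivalently, standard tableaux of the conjugate shape). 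The standard basis theorem (Fayers / James) says these polytabloids form bases. Hence the tensor products over pairs $(\alpha,\beta)$ form a basis of the relevant space, and we may write $D=\sum\Phi_{\alpha,\beta}\,(\cdots)$.

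\textbf{Integrality: the main obstacle.} The coefficients a priori lie in $\Q$. To show they are integers, I would first note that $D$ is a $\Z$-linear combination of products of determinants. For instance, a Laplace-type expansion of the full determinant expresses it through the nonstandard products $\prod\det$ indexed by arbitrary tableaux. I would then invoke Garnir relations, which straighten any polytabloid into standard ones with integer coefficients. This straightening step is where the cited results of Fayers on Specht modules over $\Z$ would be needed. Pinning down the precise realisation of $D$ as an integral element of $S^\lambda_\Z\otimes S^{\lambda'}_\Z$ is, I expect, the delicate point.
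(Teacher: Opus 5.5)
Your existence argument is sound and is essentially the paper's. Schur--Weyl duality forces $\lambda=n^m$, and the standard polytabloids, realised as products of column determinants, give bases of the relevant isotypic pieces. Phrasing this through multilinear forms instead of $\bwedge^{mn}(V\otimes W)$ is fine. Your remark that the degree-lexicographic order differs from the Kronecker order by a fixed column permutation, hence a global sign, is also correct.

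\textbf{The gap: integrality.} The argument only gives rational coefficients, but the statement asserts integers, and your route to integrality fails at its first step. A Laplace expansion of $\det(A\star B)$ along the $n$ blocks of $m$ consecutive columns gives
\[ \det(A\star B)=\sum_{(R_1,\ldots,R_n)}\pm\prod_{j=1}^n\det A_{R_j}\prod_{i\in R_j}b_{ij}. \]
These are minors of $A$ times \emph{monomials} (tabloids) in the $b_{ij}$, not minors of $B$. Already for $m=n=2$, the two terms with $\{R_1,R_2\}=\{\{1,2\},\{3,4\}\}$ give $\det A_{12}\det A_{34}\,(b_{11}b_{21}b_{32}b_{42}+b_{12}b_{22}b_{31}b_{41})$, whereas $\det B_{13}\det B_{24}$ has four monomials. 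The paper's two-term formula only emerges after the Pl\"ucker relation $\det A_{14}\det A_{23}=\det A_{13}\det A_{24}-\det A_{12}\det A_{34}$. Turning the $B$-side monomials into polytabloids means projecting onto the Specht component, and that projection is not integral: symmetrising over $\Sigma_{mn}$ produces $H_{m,n}\det(A\star B)$, as in Theorem~\ref{thm: detABSymmetric}. So the claim that $D$ is a $\Z$-combination of products of determinants over arbitrary tableaux is essentially the content of the theorem, and Garnir straightening has no integral input to act on.

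\textbf{How to close it.} The paper computes the coefficients. The invariant pairing $S^\lambda\otimes S^{\lambda'}\to\varepsilon$ has matrix $F_{\alpha,\beta}=\lf\alpha,\beta\rf$ in the standard bases. By Fayers, $F$ is triangular for dominance with $\pm1$ on the diagonal, hence invertible over $\Z$. The dual invariant element $\varepsilon\to S^\lambda\otimes S^{\lambda'}$ therefore has the integral coefficient matrix $\Phi=(\mathcal{E}F\mathcal{E})^*$.

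Alternatively, you can finish inside your own framework. You have $D=\sum c_{\alpha\beta}A_\alpha B_\beta$ with $c_{\alpha\beta}\in\Q$, and $D$ has integer coefficients as a polynomial in the $a_{ij},b_{ij}$. Under $\{t\}\mapsto\prod_i a_{i,\mathrm{row}_t(i)}$, distinct tabloids go to distinct monomials, and likewise on the $B$ side. Each standard polytabloid $e_s$ contains $\{s\}$ with coefficient $1$ and otherwise only tabloids strictly dominated by $\{s\}$. Comparing the coefficients in $D$ of the monomials attached to pairs of standard tabloids then gives a unitriangular integer system for the $c_{\alpha\beta}$, which forces $c_{\alpha\beta}\in\Z$.
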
 

This formula depends on the choice of ordering of the rows $\z_1,\ldots, \z_{mn}$ since  the notion of a standard tableau also does. The integers $\Phi_{\alpha,\beta}$ are the entries of   a matrix $\Phi$ which is  the inverse conjugate of a matrix introduced by Fayers. It is a sparse and explicit quasi-unipotent  matrix whose entries are $0,\pm 1$ (although its inverse  conjugate $\Phi$ can have entries which are larger integers).

\begin{example} The two standard tableaux of shape $(2,2)$ are: 
\[  \underbrace{\begin{Young}
1 & 3  \cr
2  & 4 \cr
\end{Young}}_{1_{2,2}}   \quad   , \quad   \underbrace{\begin{Young}
1 & 2  \cr
3  & 4  \cr
\end{Young}}_{1'_{2,2}}   \ .  \] 
The matrix $\Phi$ with respect to  $1_{2,2}, 1'_{2,2}$ is 
 \[ \Phi = \begin{pmatrix}  1 & 0 \\ 0 & -1 \end{pmatrix} \ . \]
 Formula \eqref{Intro:FirstVDMformula} then reduces to the formula in example \ref{intro: example22}.
\end{example}

\subsection{Second formula for the determinant}
Our second formula is in completely closed form, but involves summing over a large number of terms.
\begin{thm} \label{thm: SecondFormula} Without loss of generality, let $m\geq n$. Let $\Sigma_{mn}$ denote the symmetric group which acts by permutations of $\{1,\ldots, mn\}$.  For any $\sigma \in \Sigma_{mn}$,  let ${}^\sigma1_{n^m}$ denote the (non-standard) tableau obtained from $1_{n^m}$ by permuting its entries. Let 
\[ H_{m,n} =  \prod_{i=0}^{n-1}  \frac{(m+i)!}{i!}  \ .\]
The  multivariable Vandermonde determinant satisfies:
\begin{multline}  \det V_{(N_1,\ldots, N_r)} (\z_1,\ldots , \z_{mn}) =  \frac{1}{H_{m,n}}  \sum_{\sigma \in \Sigma_{mn}}  \varepsilon(\sigma)    \label{Intro:SecondVDMformula} \\
  \prod_{i=1}^n  \det V_{(N_1,\ldots, N_k)} (z'_k, k\in \mathrm{col}_i({}^\sigma1_{n^m}) )  \prod_{j=1}^m  \det V_{(N_{k+1},\ldots, N_r)} (z''_k, k\in \mathrm{row}_j({}^\sigma1_{n^m}))     \ . \end{multline}
  where $z'_k, k\in \mathrm{row}_i({}^\sigma1_{n^m}) $ denotes the $z'_k$ where the indices $k$ are taken from the elements in the ith row of ${}^\sigma1_{n^m}$ in  order (from left to right), and similarly, $\mathrm{col}_j({}^\sigma1_{n^m})$ denotes the elements in the jth column of ${}^\sigma1_{n^m}$, ordered from top to bottom.
\end{thm}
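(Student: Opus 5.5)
The plan is to reduce the statement to a uniqueness statement for alternating multilinear forms on a tensor product, and then to fix one scalar. Write $U=\C^{m}$ and $W=\C^{n}$. For each row $i$, let $a_i\in U$ be the row vector $(e_j(\z'_i))_j$ of the Vandermonde matrix for $(N_1,\ldots,N_k)$, and $b_i\in W$ the analogous row for $(N_{k+1},\ldots,N_r)$.

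\textbf{Step 1 (reduction to a Kronecker product).} Every monomial in $E_{\underline{N}}$ factors uniquely as a monomial in $\z'$ times a monomial in $\z''$. Hence the $i$th row of $V_{\underline{N}}$ is the Kronecker product $a_i\otimes b_i$, up to a fixed permutation of the columns that does not depend on the $\z_i$. Therefore
\[\det V_{\underline{N}}(\z_1,\ldots,\z_{mn}) = \pm \det\big(a_i\otimes b_i\big)_{i},\]
with a universal sign $\pm$. The left-hand side $F$ of the theorem is the restriction to decomposable vectors $x_i=a_i\otimes b_i$ of the alternating $mn$-form on $U\otimes W$. Since $\dim(U\otimes W)=mn$, that alternating form is unique up to a scalar.

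\textbf{Step 2 (the right-hand side is alternating).} For the trivial tableau $T=1_{n^m}$, set
\[\psi(a,b)=\prod_{i=1}^n \det(a_k)_{k\in \mathrm{col}_i(T)}\ \prod_{j=1}^m \det(b_k)_{k\in\mathrm{row}_j(T)} .\]
Each $a_k$ and each $b_k$ occurs exactly once, and linearly. So $\psi$ is the restriction of a multilinear form on $(U\otimes W)^{\otimes mn}$, obtained by pairing the $k$th $U$ factor with the $k$th $W$ factor. Permuting the entries of $T$ by $\sigma$ gives the summand $\sigma\psi$, where $\Sigma_{mn}$ acts diagonally on the indices. Hence $G=\sum_\sigma \varepsilon(\sigma)\,\sigma\psi$ is the antisymmetrisation of a multilinear form, and therefore an alternating $mn$-form on $U\otimes W$. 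By Step 1 this gives $G=c\,F$ for some constant $c$.

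Representation-theoretically this is the Cauchy decomposition
\[\bwedge^{mn}(U\otimes W)=\bigoplus_\lambda S_\lambda U\otimes S_{\lambda'}W .\]
The dimension constraints force $\lambda=n^m$, and this summand is one-dimensional, equal to $\det_U^{\,n}\otimes\det_W^{\,m}$. This explains why the product of column determinants in $a$ and row determinants in $b$ is the correct building block.

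\textbf{Step 3 (the constant — the main obstacle).} It remains to show $c=\pm H_{m,n}$, with the sign matching the universal sign from Step 1. The route I would try first is the Young symmetriser. The form $\psi$ is antisymmetric under the column group $C_T$ in the $a$-variables and under the row group $R_T$ in the $b$-variables. Translating the diagonal action, $\psi$ corresponds to a vector on which $\Sigma_{mn}$ acts through the element
\[c_T=\sum_{q\in C_T,\,p\in R_T}\varepsilon(q)\,qp\]
(or its sign twist). One then uses the standard identity $c_T^2=h\,c_T$, where
\[h=\frac{(mn)!}{f^{n^m}}=\prod_{\text{boxes}}\text{hook lengths}=\prod_{i=0}^{n-1}\frac{(m+i)!}{i!}=H_{m,n}.\]
This identity should convert the full antisymmetrisation over $\Sigma_{mn}$ into $H_{m,n}$ times the normalised projection onto the one-dimensional summand of Step 2. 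Here the hypothesis $m\ge n$ enters only through the bookkeeping of this hook product.

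As an independent check on both magnitude and sign, I would evaluate at the basis configuration $x_{(p,q)}=u_p\otimes w_q$, placed in the grid according to $1_{n^m}$. There $F=\pm1$, and a summand $\sigma\psi$ is nonzero only when every column of ${}^\sigma 1_{n^m}$ contains each $u_p$ exactly once and every row contains each $w_q$ exactly once. I would then verify that all surviving terms carry the same sign as the identity term, and that they number $H_{m,n}$. Example \ref{intro: example22}, with $H_{2,2}=12$, serves as a sanity check of the sign convention.
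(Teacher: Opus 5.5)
Steps 1 and 2 are sound. Step 2 is a more elementary route to proportionality than the paper's: the paper places $\rho_{\alpha,\beta}$ in the one-dimensional sign-isotypic part of $S^{\lambda}\otimes S^{\lambda'}$ via Schur--Weyl duality and Fayers' pairing, whereas you only need that an antisymmetrised $mn$-linear form on the $mn$-dimensional space $U\otimes W$ is a multiple of $\det$.

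The gap is in Step 3, which is the actual content of the theorem. The Young-symmetriser route is only asserted (``should convert''). The concrete check you propose rests on a false claim: the surviving terms at the basis configuration do \emph{not} all have the sign of the identity term. For $m=n=3$, take $\sigma$ such that ${}^\sigma 1_{3^3}$ has rows $(3,5,7)$, $(4,9,2)$, $(8,1,6)$.
\begin{itemize}
\item Its columns carry $(u_3,u_1,u_2)$, $(u_2,u_3,u_1)$, $(u_1,u_2,u_3)$.
\item Its rows carry $(w_1,w_2,w_3)$, $(w_2,w_3,w_1)$, $(w_3,w_1,w_2)$.
\item So all six minors equal $+1$, but $\sigma=(1\,3\,8\,2\,4\,5\,9\,6)$ is odd, and this term is $-1$.
\end{itemize}
Hence the surviving terms outnumber $H_{3,3}$, and the constant is a signed count with real cancellation. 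Your claim does hold when $\min(m,n)\le 2$, which is why Example \ref{intro: example22} cannot detect the problem.

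What is missing is an evaluation of that signed count. Your $c_T^2=h\,c_T$ idea does this once it is made precise. Let $R,C$ be the row and column stabilisers of $T=1_{n^m}$.
\begin{itemize}
\item At the basis configuration, the $a$-factor of the $\sigma$-summand is $\sum_{q\in C}\varepsilon(q)\,[\sigma q^{-1}\in R]$, and the $b$-factor is $\sum_{p'\in R}\varepsilon(p')\,[\sigma p'^{-1}\in C]$.
\item Since $R\cap C=1$, the summand is non-zero exactly when $\sigma=pq=q'p'$ with $p,p'\in R$ and $q,q'\in C$, these decompositions being unique. It then equals $\varepsilon(\sigma)\varepsilon(q)\varepsilon(p')=\varepsilon(q)\varepsilon(q')$.
\item So the constant is $\sum_{pq=q'p'}\varepsilon(q)\varepsilon(q')$. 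This is precisely the coefficient of the identity in $c_T^2$, where $c_T=\sum_{q\in C,\,p\in R}\varepsilon(q)\,qp$; substitute $q_1=q'^{-1}$, $p_1=p$, $q_2=q$, $p_2=p'^{-1}$ in $q_1p_1q_2p_2=1$.
\item Since $c_T^2=h\,c_T$ and the identity has coefficient $1$ in $c_T$, the constant is $h=H_{m,n}$. Its sign is $+$ relative to the Kronecker-ordered determinant $\det(a_i\otimes b_i)$, which equals $1$ at this configuration.
\end{itemize}

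The paper makes the same specialisation (its $C\otimes D$ with $C=D=I$). It then quotes the value of the signed count $\lf e_{1_\lambda},e_{1'_\lambda}\rf$ from Fayers' Lemma 4.6. With the computation above, your argument becomes self-contained and avoids Specht modules entirely.

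Two side remarks. First, the hypothesis $m\ge n$ plays no role, since the hook product of a rectangle is symmetric in $m$ and $n$. Second, your caution about a universal sign in Step 1 is warranted. For a genuinely degree-lexicographic order, e.g.\ $\underline N=(3,2)$ where $y$ precedes $x^2$, the columns differ from the Kronecker order by an odd permutation. So the statement needs the Kronecker-compatible ordering that the paper's lemma and its $(3,2)$ example tacitly use, and no evaluation of the constant can repair that sign.
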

The number  $H_{m,n}$ is the  product of the  Hook lengths for a tableau of shape $m^n$ or $n^m$.

\begin{example} Let $r=2$ and $m=3, n=2$. We have
\[ 
1_{2^3} = \begin{array}{|c|c|c|} \hline
 1 &  4    \\  \hline
 2 & 5    \\ \hline
3 & 6   \\ \hline
\end{array}
\] 
 and therefore theorem \ref{thm: SecondFormula} states that
\begin{multline}  \det V_{(3,2)}(\z_1,\ldots, \z_6)  =\\  
 \frac{1}{144}  \sum_{\sigma \in \Sigma_6} \varepsilon(\sigma)   \det   V_{3}(z'_{\sigma_1},z'_{\sigma_2},z'_{\sigma_3}) \det V_{3}(z'_{\sigma_4},z'_{\sigma_5},z'_{\sigma_6}) \\
 \times \,   \det V_{3}(z''_{\sigma_1},z''_{\sigma_4})\det V_{3}(z''_{\sigma_2},z''_{\sigma_5})\det V_{3}(z''_{\sigma_3},z''_{\sigma_6})  \end{multline}
When $r=2$ and $m=n=3$, we have
\[ 
1_{3^3} = \begin{array}{|c|c|c|} \hline
 1 &  4 & 7   \\  \hline
 2 & 5  & 8  \\ \hline
3 & 6 & 9  \\ \hline
\end{array}
\] 
 and theorem \ref{thm: SecondFormula} yields
\begin{multline}  \det V_{(3,3)}(\z_1,\ldots, \z_9)  =\\  
 \frac{1}{8640}  \sum_{\sigma \in \Sigma_9} \varepsilon(\sigma)   \det  V_{3}(z'_{\sigma_1},z'_{\sigma_2},z'_{\sigma_3}) \det V_{3}(z'_{\sigma_4},z'_{\sigma_5},z'_{\sigma_6}) \det V_{3}(z'_{\sigma_7},z'_{\sigma_8},z'_{\sigma_9}) \\
 \times   \, \det V_{3}(z''_{\sigma_1},z''_{\sigma_4},z''_{\sigma_7}) \det V_{3}(z''_{\sigma_2},z''_{\sigma_5},z''_{\sigma_8})\det V_{3}(z''_{\sigma_3},z''_{\sigma_6},z''_{\sigma_9})  \end{multline}
\end{example}

These formulae are proved as special cases of a more general determinantal identity for matrices, which will be postponed to section \S\ref{sect: IdentityAstarB}.

\begin{rem}
By repeated application of either  formula \eqref{Intro:FirstVDMformula} or \eqref{Intro:SecondVDMformula} to reduce to the one-variable case,  and equation \eqref{detVandermonde1dimension},  we deduce that the general multivariable Vandermonde determinant is a sum of completely factorising terms in which the variables are  separated. 
\end{rem}

\subsection{`Homogeneous' Vandermonde determinants and interpolation} \label{sect: Homogeneous}
A common version of multivariable Vandermonde determinants involves only considering the union of the set of  monomials which are \emph{homogeneous}  of given degrees $0,\ldots,N$.

To define them, let $N>0$  be an integer and $r\geq 1$. Let 
$E^{\mathrm{hom}}_N=(e_j(\z))$ denote the ordered set of monomials in
 $r$ variables $\z= (z^{(1)}, \ldots, z^{(r)})$  which are homogeneous of  degree  $k$, for all $k\leq N$. They are to be ordered first with respect to their total degree (from smallest to largest), and then within each   degree, they are to be ordered lexicographically. Let $\ell$ denote the number of such monomials. The `homogenous' Vandermonde determinant is the square matrix:
\begin{equation} \label{HomVandermonde}
V_{N,r}^{\mathrm{hom}} (\z_1,\ldots, \z_\ell) = ( e_j(z_i)) \ .
\end{equation}

\begin{example} \label{example: Pascal}
Let $p_i=(x_i,y_i)$ for $i=1,\ldots, 6$ and consider the   matrix
\begin{equation} \label{Vhomconic} V^{\mathrm{hom}}_{2,2}(p_1,\ldots, p_6) =  \begin{pmatrix} 1 & x_1 & y_1 & x_1^2 & x_1y_1 & y_1^2  \\ 
1 & x_2 & y_2 & x_2^2 & x_2y_2 & y_2^2  \\
1 & x_3 & y_3 & x_3^2 & x_3y_3 & y_3^2  \\
1 & x_4 & y_4 & x_4^2 & x_4y_4 & y_4^2  \\
1 & x_5 & y_5 & x_5^2 & x_5y_5 & y_5^2  \\
1 & x_6 & y_6 & x_6^2 & x_6y_6 & y_6^2  \\
\end{pmatrix}
\end{equation}
Its determinant vanishes if and only if $p_1,\ldots, p_6$ lie on a conic. A classical problem from antiquity is to determine  geometric conditions on the $p_i$ for this to hold.
 More precisely, if $\underline{c}=(c_1,\ldots, c_6)^T$   is in the kernel of  $V^{\mathrm{hom}}_{2,2}$, then the points $p_i$ lie on the conic $f(x,y)=0$, where $f(x,y)=c_1 + c_2 x + c_3 y +c_4 x^2 + c_5 xy + c_6 y^2$.
\end{example}

Although such matrices  are not our primary focus, one observes that  $V_{N,r}^{\mathrm{hom}} $ may always be  obtained as a submatrix of  $V^{(N,\ldots, N)}$ by deleting rows and columns. It follows that  the proofs of the two previous theorems provide analogous statements for  homogeneous Vandermonde determinants.

\begin{thm}  \label{thm: homogeneous} The determinant $ \det(V_{N,r}^{\mathrm{hom}} )(\z_1,\ldots , \z_{\ell})$  of the homogeneous  Vandermonde matrix  may be written as an explicit sum of products of  degenerate Vandermonde matrices in fewer variables in which the variables separate. \end{thm}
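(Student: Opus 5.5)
The plan is to realise $V^{\mathrm{hom}}_{N,r}$ as a square submatrix of a matrix to which the general amalgamation identity (postponed to \S\ref{sect: IdentityAstarB}, of which Theorems \ref{thm: FirstVDMformula} and \ref{thm: SecondFormula} are special cases) applies. The key point is that that identity does not use the fact that the factors are Vandermonde matrices. It only uses that the big matrix is obtained row by row as a Kronecker product of rows of two smaller matrices $A$ and $B$, and that its determinant is then expressed through products of maximal minors of $A$ and $B$.

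\textbf{Step 1: the embedding.} Every monomial of total degree $\leq N$ in $r$ variables has degree $\leq N$ in each variable. So the columns of $V^{\mathrm{hom}}_{N,r}(\z_1,\ldots,\z_\ell)$ form a subset $S$ of the columns of the rectangular $\ell\times (N+1)^r$ matrix $W(\z_1,\ldots,\z_\ell)=(e_j(\z_i))$, where $e_j$ runs over $E_{(N+1,\ldots,N+1)}$. Split the variables as $\z_i=(\z'_i,\z''_i)$ with $k$ and $r-k$ coordinates. Then row $i$ of $W$ is, up to a fixed column permutation, the Kronecker product of row $i$ of $A=(e'_a(\z'_i))$ and row $i$ of $B=(e''_b(\z''_i))$. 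Here $A$ is the $\ell\times (N+1)^k$ multivariable Vandermonde matrix and $B$ is the $\ell\times(N+1)^{r-k}$ one. So $V^{\mathrm{hom}}_{N,r}$ is the column-submatrix $W_S$ of an amalgam of $A$ and $B$.

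\textbf{Step 2: expanding the determinant.} Write $\det W_S$ as a multilinear alternating function of the rows. Each row lies in the image of the Kronecker map $\C^{(N+1)^k}\otimes \C^{(N+1)^{r-k}}\to\C^{(N+1)^r}$, composed with projection onto the coordinates in $S$. Hence $\det W_S$ is obtained from the $\ell$-fold exterior power of this map, restricted to $\bwedge^\ell$ of a tensor product. The next move is to apply the Schur--Weyl/Specht-module decomposition from the general identity. It expresses the alternating tensor $\bigwedge_i (a_i\otimes b_i)$ as an explicit linear combination of products of $\bigwedge$'s of subsets of the $a_i$ and of the $b_i$. Concretely, take the fully symmetrised version as in Theorem \ref{thm: SecondFormula}: a signed sum over $\sigma\in\Sigma_\ell$ of products of wedges over the rows of a shape $\lambda$ and the columns of $\lambda$. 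Pairing with the linear functional given by $S$ then yields a sum of products of minors of $A$ and minors of $B$. Each such minor is a determinant of a degenerate Vandermonde matrix: a multivariable Vandermonde matrix in fewer variables $k$ or $r-k$, with some monomial columns selected. In each product the $\z'$ and $\z''$ variables are separated, which is exactly the statement.

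\textbf{Step 3: the main obstacle.} Since $\ell$ is not a product $mn$, the rectangular shape $n^m$ is unavailable. I would either use a general shape $\lambda\vdash\ell$ in the Specht-module argument, or pad the configuration. For the padding, add auxiliary points and columns so that $W_S$ becomes a block-triangular submatrix of the full square $V_{(N+1,\ldots,N+1)}$ whose complementary block has determinant $1$. For example, specialise the extra points so that the extra block is unitriangular, then apply Theorem \ref{thm: FirstVDMformula} directly and specialise. I expect this bookkeeping to be the hard part: checking that the specialisation keeps the separated product structure, and making "explicit" precise. Here I would try the padding route first, since it reuses the proven theorems verbatim.
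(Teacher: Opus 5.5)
\textbf{There is a genuine gap in Step 3, and Step 2 does not close it.}

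Step 1 is fine. Padding by auxiliary \emph{points} cannot make $W_S$ a diagonal block of a block-triangular $V_{(N+1,\ldots,N+1)}$. The constant monomial $1$ lies in $S$, so every auxiliary row has an entry $1$ in an $S$-column. Meanwhile, the original rows have generically nonzero entries in the columns $S^c$, the monomials of total degree $>N$. So for every choice of auxiliary points, neither off-diagonal block vanishes. Theorem \ref{thm: FirstVDMformula}, which only accepts Vandermonde rows, therefore cannot be ``applied directly and specialised'' to isolate $\det W_S$. Staying with genuine points would need a limiting device, such as sending the auxiliary points to infinity and extracting a coefficient, which is not what you describe.

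The general-shape route of Step 2 is not yet a proof either. For $\ell<(N+1)^r$, the space $\bwedge^\ell(V\otimes W)$ splits into several summands $\mathbb{S}_\lambda V\otimes\mathbb{S}_{\lambda'}W$. For non-rectangular $\lambda$, at least one of the tableau maps lands in tensor products of exterior powers of degree below the dimension, i.e.\ vectors of minors rather than scalars. The symmetrised identity of Theorem \ref{thm: SecondFormula} relies on only $\lambda=n^m$ contributing, so that $\bwedge^{mn}(V\otimes W)$ is one-dimensional. Your claim that pairing with the functional of $S$ yields products of minors would require inverting the whole decomposition, and you give no argument for it.

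\textbf{The missing idea} is to pad with \emph{generic} rows rather than points, and to invoke Theorem \ref{thm: detAB} (or \ref{thm: detABSymmetric}), which holds for arbitrary $A,B$ over a commutative ring. This is the paper's route.
\begin{enumerate}[(i)]
\item Take $A$ and $B$ with $(N+1)^r$ rows. The first $\ell$ rows are your Vandermonde rows in $\z'$ and $\z''$; the remaining rows are indeterminates $a_{ij}, b_{ij}$.
\item Row $i$ of $A\star B$ has entries $a_{ip}b_{iq}$, and these variables occur in no other row. Let $(p_i,q_i)$, $i>\ell$, enumerate the columns in $S^c$. Then the coefficient of $\prod_{i>\ell}a_{i,p_i}b_{i,q_i}$ in $\det(A\star B)$ is $\pm\det W_S$.
\item Equivalently, specialise those rows of $A$ and $B$ to unit vectors $e_{p_i}$ and $f_{q_i}$. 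This produces exactly the unitriangular complementary block you wanted, but only by leaving the class of Vandermonde rows.
\item Extract the same coefficient from $\sum\Phi_{\alpha^\vee,\beta^\vee}A_\alpha B_\beta$. Each padded row lies in exactly one column of $\alpha$ and exactly one column of $\beta$. So each factor $\det A_{\mathrm{col}_j(\alpha)}$ becomes a signed complementary minor: the genuine rows, with the corresponding monomial columns deleted, or zero if a column index repeats.
\item Such a minor is a degenerate Vandermonde determinant in the $\z'$ alone, and likewise for $B$ in the $\z''$. This gives the explicit separated formula the statement asks for.
\end{enumerate}
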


This may be done in two different ways,  by taking degenerate  versions of the expressions in either  theorem \ref{thm: FirstVDMformula} or \ref{thm: SecondFormula} as we now illustrate.

\begin{example} \label{Ex: Pascalseparate}
Consider  example \ref{example: Pascal}.
Without loss of generality, we may assume that $x_2=x_1$, i.e., points $p_1$ and $p_2$ have the same $x$ coordinate. Then theorem \ref{thm: detAB} produces the  following expression for 
$
\det  V^{\mathrm{hom}}_{2,2}(p_1,\ldots, p_6)  $: 
\begin{eqnarray} 
& \Big( \!\!\!\!\!\!&  V_{1,3,4}(x)\, (x_1-x_6) \times (y_2-y_5)(y_1-y_5)(y_3-y_6)   \nonumber \\ 
&&  V_{1,3,5}(x)\,  (x_1-x_4) \times (y_2-y_6)(y_1-y_6)(y_3-y_4)\nonumber \\ 
&&  V_{1,3,6}(x) \,  (x_1-x_5) \times (y_2-y_4)(y_1-y_4)(y_3-y_5) \nonumber\\ 
  && V_{1,4,5}(x) \, (x_1-x_6) \times (y_2-y_3)(y_1-y_3)(y_4-y_6)  \nonumber\\
  && - V_{1,3,4}(x) \,   (x_1-x_5) \times (y_2-y_6)(y_1-y_6)(y_3-y_5) \nonumber\\ 
   && - V_{1,3,5}(x) \,    (x_1-x_6)\times (y_2-y_4)(y_1-y_4)(y_3-y_6) \nonumber\\
  &&  - V_{1,3,6}(x) \, (x_1-x_4) \times (y_2-y_5)(y_1-y_5)(y_3-y_4) \nonumber\\
  && -  V_{1,4,6}(x) \,   (x_1-x_5)\times (y_2-y_3)(y_1-y_3)(y_4-y_5)
   \Big) \times (y_1-y_2) \nonumber \ ,
    \end{eqnarray}
    where $V_{i,j,k}(x) = (x_i-x_j)(x_j-x_k)(x_i-x_k)$. 
    Each line is thus a  product of Vandermonde determinants in a single variable. 
    This expression further collapses under additional assumptions on the location of the points $p_i$, which we leave for the amusement of the reader.
   \end{example}

In fact, our technique also provides expressions for this determinant as a sum of factorising terms in which the variables \emph{do not separate}, but which  encode information about collinearity of the $p_i$. We illustrate again using  example \ref{example: Pascal}.

\begin{example} 
Without loss of generality, we may assume $p_6 =(0,0)$ is  the origin and $p_5$ lies on the $x$-axis (i.e.,  $y_5=0$). Then theorem \ref{thm: detAB} also
 delivers a sum of products  of   degenerate  homogeneous Vandermonde determinants of degree $1$:
\begin{multline}  \det\, V^{\mathrm{hom}}_{2,2} =    \det  \begin{pmatrix} 1 & x_1 & y_1\\
1& x_2 & y_2\\
1& x_3 & y_3 \end{pmatrix} (x_1 y_4 - x_4 y_1) (x_2 y_5 -x_5 y_2) y_3 (x_5-x_4) \\
\qquad \qquad  \qquad  +  \det  \begin{pmatrix} 1 & x_1 & y_1\\
1& x_3 & y_3\\
1& x_4 & y_4 \end{pmatrix}  (x_1 y_2 - x_2 y_1) (x_3 y_5 -x_5 y_3)  y_4 (x_5-x_2)  \\
   -    \det  \begin{pmatrix} 1 & x_1 & y_1\\
1& x_2 & y_2\\
1& x_4 & y_4 \end{pmatrix}   \,  (x_1 y_3 - x_3 y_1) (x_2 y_5 -x_5 y_2)   y_4 (x_5-x_3) \ .   
   \end{multline}
   Each one of the  $3\times 3$ determinants in the previous expression vanishes whenever the three points corresponding to its entries are collinear.
   \end{example}

\subsection{Applications}
The formulae in this paper may be used to compute determinants of matrices where the entries in every row satisfy multiplicative relations uniformly in each column, i.e., each entry is a polynomial in a smaller number of matrix entries. 
The main statements are 
theorems \ref{thm: detAB} and \ref{thm: detABSymmetric} which provide a formula for an `amalgamated product' of two matrices and are proved using the representation theory of the symmetric group. This might be a useful complement to the  methods for computing determinants described in \cite{Krattenthaler,Krattenthaler2}.  As remarked above, these formulae can be applied in different ways, in both generic and degenerate form, to compute multivariable Vandermonde determinants of both rectangular and homogeneous types.

Vandermonde determinants and their variants appear in a variety of different fields including approximation theory and multivariate interpolation \cite{Olver}, coding theory \cite{ToricVdM}, algebraic geometry \cite{TropicalVdM},  harmonic analysis \cite{ClusteredNodesVdM} and potential theory \cite{BloomLevenberg}.  We provide just one application  of our formulae  to give a simple proof of the multiplicativity of the transfinite diameter \S\ref{sect: applicationtransfinite}. 
We  expect that our formulae could also be used to explain factorisation and vanishing properties of generalised Vandermonde determinants for specific configurations  of points.  See \cite{SeparateVdM} and references therein. Further examples are considered in \ref{example:V32} and \ref{example: Pascalconic}.  

\section{A determinant identity for an amalgam of two matrices} \label{sect: IdentityAstarB} 
We derive a formula for the determinant of a matrix obtained from two smaller matrices by taking the Kronecker tensor products of each of their rows.

\subsection{An amalgamated matrix}
Let $m,n\geq 1$ and consider a  matrix with $mn$ rows and $m$ columns:
\begin{equation} \label{matrixAdef} A=   \begin{pmatrix}  a_{11} & a_{12} &  \ldots & a_{1m} \\ 
a_{21} & a_{22} &  \ldots & a_{2m} \\ 
 \vdots    &   \vdots&  & \vdots \\
a_{mn\, 1 } & a_{mn \, 2} &  \ldots & a_{mn \, m} \\ 
 \end{pmatrix}  = 
 \begin{pmatrix}  \underline{a}_1 \\ 
 \underline{a}_2 \\ 
      \vdots \\
\underline{a}_{mn} \\ 
 \end{pmatrix}    
 \end{equation} 
 with entries $a_{i  j}$ in a commutative ring $R$, and where $\underline{a}_i = (a_{i 1} \ldots a_{i  m} ) \in R^m$ denotes the $i^{\mathrm{th}}$ row vector.
 Likewise, consider the matrix with  $mn$ rows and $n$ columns 
\begin{equation} \label{matrixBdef}    B=   \begin{pmatrix}  b_{11} & b_{12} &  \ldots & b_{1 n } \\ 
b_{21} & b_{22} &  \ldots & b_{2 n } \\ 
 \vdots    &  \vdots &  & \vdots \\
b_{mn\, 1 } & b_{mn \, 2} &  \ldots & b_{mn \,  n } \\ 
 \end{pmatrix} =  \begin{pmatrix}   \underline{b}_1 \\ 
 \underline{b}_2 \\ 
      \vdots \\
\underline{b}_{mn} \\ 
 \end{pmatrix}     \end{equation}
 with entries $b_{ij}\in R$, and  $\underline{b}_i = (b_{i  1} \ldots b_{i  n} ) \in R^n$ denotes the $i^{\mathrm{th}}$ row.
  
  \begin{defn}  Consider  a new matrix which   amalgamates  these two matrices,  by taking the Kronecker tensor product of their  rows:
 \[  
 A \star B =     \begin{pmatrix} \underline{a}_1 \otimes   \underline{b}_1 \\ 
  \underline{a}_2 \otimes \underline{b}_2 \\ 
      \vdots \\
 \underline{a}_{mn} \otimes  \underline{b}_{mn} \\ 
 \end{pmatrix}  \]
 It is a square matrix with  $mn$ rows and columns, and entries in $R$.  
 \end{defn} Our convention for the Kronecker tensor product of two row vectors is:
 \begin{multline}  \label{KroneckerTensorConvention} (a_1  ,  \ldots ,  a_m) \otimes (b_1,   \ldots  , b_n)  =   \\ 
   ( \underbrace{a_1 b_1  ,  a_2 b_1  ,  \ldots ,   a_m b_1}_{\underline{a} \otimes b_1} , \underbrace{a_1 b_2 , a_2 b_2 , \ldots, a_m b_2}_{\underline{a} \otimes b_2}  , \ldots ,   \underbrace{a_1 b_n , a_2 b_n , \ldots, a_m b_n}_{\underline{a} \otimes b_n}   )  \end{multline} 
 where commas between entries have been inserted to assist with  legibility and $\underline{a} = (a_1,\ldots, a_m)$.  This convention must of course be applied consistently to every row in the definition of the matrix $A \star B$.  \subsection{Special case: Kronecker tensor product}
 Suppose that $C$ is a square $m\times m$ matrix, and $D$ is a square $n\times n$ matrix with rows $\underline{d}_1,\ldots, \underline{d}_n$. Then the Kronecker tensor product $C\otimes D$ may be represented by the matrix $A \star B$ where
 \begin{equation} \label{KroneckerCase} A =  \begin{pmatrix}    C  \\  C  \\  \vdots\\  C \end{pmatrix} \in M_{mn,m}(R)   \quad \hbox{ and } \quad B =   \begin{pmatrix}    m\begin{cases} \underline{d}_1\\ \vdots\\ \underline{d}_1 \end{cases} \\ \vdots \\ m\begin{cases} \underline{d}_n\\ \vdots\\ \underline{d}_n \end{cases}   \\  \end{pmatrix}\in M_{mn,n}(R) 
\end{equation}
It is well-known that $\det(C \otimes D) = \det(C)^n \det(D)^m$. 
 
 \subsection{Minors}
 If $I \subset \{1,\ldots, mn\}$ is any subset with $m$ elements, let $A_I$ denote the square matrix with $m$ rows and columns given as follows:
 \[ A_I =  \begin{pmatrix}  a_{i_11} & a_{i_12} &  \ldots & a_{i_1m} \\ 
a_{i_21} & a_{i_22} &  \ldots & a_{i_2m} \\ 
 \vdots    &  \vdots &  & \vdots \\
a_{i_m\, 1 } & a_{i_m \, 2} &  \ldots & a_{i_m \, m} \\ 
 \end{pmatrix}  = 
 \begin{pmatrix}  \underline{a}_{i_1} \\ 
 \underline{a}_{i_2} \\ 
      \vdots \\
\underline{a}_{i_m} \\ 
 \end{pmatrix}    
 \]
where $I= \{i_1,\ldots, i_m\}$ and $i_1<i_2< \ldots< i_m$ are increasing.  We define $B_I$ similarly by selecting the rows of $B$ indexed by $I$.

\subsection{Young tableaux}  Our formula for the determinant of $A\star B$ involves  Young diagrams. Recall that a Young diagram is specified by a partition 
$\lambda = (n_1,\ldots, n_r)$  of $N=n_1+ \ldots + n_r$, denoted by $\lambda \vdash N$.    The integers $n_i$ satisfy   $n_1\geq \ldots \geq n_r\geq 1$, and one writes $|\lambda|$ for $N$. The corresponding Young diagram is  a left-justified array of boxes with $n_i$ boxes in the $i^\mathrm{th}$ row. A standard Young tableau is defined to be a Young diagram in which every integer from $1$ to $n_1+\ldots +n_r$ is placed in every box in such a way that  every row (and column) is increasing when read from left to right (or top to bottom).  We shall denote a standard Young tableau corresponding to $\lambda$ using Greek letters $\alpha, \beta$, etc.
The conjugate diagram $\lambda'$ to $\lambda$ is obtained by reflecting $\lambda$ along the diagonal (from top-left to bottom-right) which passes through the  top-left corner of the diagram. The conjugate of a standard Young tableau $\alpha$ is also standard, and is denoted by $\alpha'$.

Below is a picture of a standard Young tableau and its conjugate:
\[  \underbrace{\begin{Young}
1 & 3 &4 \cr
2  & 5 \cr
6 & 7 \cr
\end{Young}}_{\alpha}   \quad   , \quad   \underbrace{\begin{Young}
1 & 2 & 6 \cr
3  & 5 & 7 \cr
4 \cr
\end{Young}}_{\alpha'}  \]
The formulae for $\det(A\star B)$ only involves Young tableaux of rectangular shape, i.e., of the form $n^m=(n,\ldots, n)$ with $m$ rows consisting of $n$ boxes, or their conjugates, which are  the form $m^n=(m,\ldots, m)$ with $n$ rows of length $m$.

Let $1_{\lambda}$ denote the standard Young tableau of shape $\lambda$ where the boxes are filled consecutively firstly from top to bottom, then left to right.
For any tableau $\gamma$ of shape $\lambda$, define the sign $\varepsilon(\gamma) \in \{ -1,1\}$ to be the sign of the unique permutation  $\pi$ of $\Sigma_{|\lambda|}$ such that  $\pi(1_{\lambda}) = \gamma$. One calls two (non-standard) tableau column-equivalent if they differ only by permutating  elements in each column. The notion of row-equivalence is defined similarly.

 \begin{defn}  \cite[Definition 4.7]{Fayers}. Let $\lambda$ be a Young diagram.  Given a standard Young tableau $\alpha$ of shape $\lambda$, and  $\beta$ of conjugate shape $\lambda'$, define the \emph{Fayers number} (denoted by square brackets $[\alpha,\beta]$ in \emph{loc. cit.}):
 \begin{equation} 
    \lf \alpha, \beta \rf  \ \in \  \{-1, 0, 1\}
 \end{equation} 
 as follows. It is   zero if there are two numbers in $\{1,\ldots, |\lambda|\}$  which   lie in a common column of $\alpha$ and a common column of   $\beta$. If not, 
there exists a unique tableau $\gamma$ of shape $\lambda$ such that $\gamma$ is column-equivalent  to $\alpha$, and $\gamma'$ is  column-equivalent to $\beta$.
Define $\lf \alpha, \beta \rf$ in this case to be the sign $\varepsilon(\gamma)$.\footnote{Note that the above definition appears in \emph{loc. cit.} with `column' replaced by `row' in the discussion preceding Lemma 4.4,  but is the opposite way around in  the proof of Lemma 4.5. }
 \end{defn}  
 
 \begin{defn}  Define the \emph{Fayers matrix} $F$ to be:
 \[ (F)_{\alpha, \beta} = \lf \alpha,\beta \rf \ , \]
 where $\alpha, \beta$ range over the set of standard tableaux of shape $\lambda, \lambda'$ respectively. 
 Denote by $(\mathcal{E} F \mathcal{E})$  the matrix whose entries are $(\mathcal{E} F \mathcal{E})_{\alpha, \beta} = \varepsilon(\alpha) F_{\alpha,\beta} \varepsilon(\beta)$.  Define
  \begin{equation} 
 \Phi =  ( \mathcal{E} F\mathcal{E})^* 
 \end{equation} 
 where $*$ denotes the inverse transpose. 
  \end{defn} 
 In \cite[Lemma 4.5]{Fayers} it is shown that $F$ is invertible, and furthermore  that 
 $F_{\alpha, \beta}=0$ unless $\alpha$ dominates $\beta$.  It follows that  $F$ is lower triangular with respect to any ordering of standard tableaux which respects the dominance ordering. Recall that a shape $\lambda= (\lambda_1,\ldots, \lambda_i,\ldots )$ dominates the shape  $\mu=(\mu_1,\ldots, \mu_i,\ldots)$ (with infinitely many trailing zeros)  if $\sum_{i=1}^n \lambda_i \geq \sum_{j=1}^n 
 \mu_j$ for all $n\geq 1$ (\cite[Definition 3.2]{James}). 
 A standard  tableau $\alpha$   dominates $\beta$ if the truncation of  $\alpha$ to its sub-tableaux which contain all the entries up to $k$, for every $k$, dominates the corresponding trunction of  $\beta$, for all $k\geq 1$.

 \begin{rem} In the case of interest,  when $\lambda=n^m$ is  rectangular, we may associate to a standard tableau $\alpha$ of shape $\lambda$ the increasing sequence $r_1<r_2<\ldots< r_m$ of the sums of entries in each row. The associated lexicographic ordering is a convenient way to write a basis for the  standard tableaux $\alpha$.  For instance, if $\lf \alpha, \beta \rf$ is non-zero and $\beta \neq \alpha'$ then  $\beta'$ is strictly greater than $\alpha$ in this ordering. \end{rem}

\subsection{Statement of the  identities} 
Let $A, B$ be as above. 
\begin{notation} Given a standard tableau $\alpha$ of shape $n^m$,  and $\beta$ of shape $m^n$, let 
\[ A_{\alpha } =   \prod_{i=1}^n \det A_{\mathrm{col}_i(\alpha)} \quad \hbox{ and } \quad B_{\beta} = \prod_{j=1}^m \det B_{\mathrm{col}_j(\beta)}\]
where for a standard tableau $\gamma$,
 $\mathrm{col}_i(\gamma)$ denotes  the $i^\mathrm{th}$ column of $\gamma$.

\end{notation}

The determinant of $A\star B$ separates into a  linear combination of a product of determinants of minors of $A$, and  determinants of minors of $B$. 
Here and later on, we shall denote by $\{\alpha^{\vee}\}$ the vector space basis (over $\Q$) dual to the  vector space basis $\{\alpha\}$ indexed by standard tableau of fixed shape $n^m$, and similarly for  $\beta$ (if we view $F \in \mathrm{Hom}( V,W)$ as a linear map between two vector spaces then   $\Phi \in \mathrm{Hom}(V^{\vee}, W^{\vee})$ is a map between their duals).

\begin{thm}  \label{thm: detAB}  We have the identity 
\[ \det(A\star B) = \sum_{\alpha, \beta}   \Phi_{\alpha^{\vee},\beta^{\vee}} A_{\alpha} B_{\beta} \]
where the sum is over all standard Young tableaux $\alpha$ of shape $n^m$ and $\beta$ of conjugate shape $m^n$ 
and $\Phi_{\alpha^{\vee},\beta^{\vee}}$ denotes the corresponding entry of the matrix $\Phi$.   \end{thm}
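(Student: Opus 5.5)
The approach is to regard $\det(A\star B)$ as a multilinear function of the rows $\underline{a}_1,\ldots,\underline{a}_{mn}\in V=R^m$ and $\underline{b}_1,\ldots,\underline{b}_{mn}\in W=R^n$. It suffices to work over $\Q$ with generic entries, since the identity is polynomial with integer coefficients. Then $\det(A\star B)$ is the image of $(\underline{a}_1\otimes\underline{b}_1)\wedge\cdots\wedge(\underline{a}_{mn}\otimes \underline{b}_{mn})$ in $\bwedge^{mn}(V\otimes W)\cong\Q$. This gives a linear functional $\Delta$ on $V^{\otimes mn}\otimes W^{\otimes mn}$ with two properties:
\begin{enumerate}[(i)]
\item it is invariant under $SL(V)\times SL(W)$;
\item it transforms by the sign character under the diagonal action of $\Sigma_{mn}$, which permutes the rows of $A$ and $B$ simultaneously.
\end{enumerate}

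First I identify the $SL(V)$-invariant multilinear functions of $\underline{a}_1,\ldots,\underline{a}_{mn}$. By the first fundamental theorem of invariant theory (equivalently, Schur--Weyl duality, using $\dim V=m$), they are spanned by products of $m\times m$ minors over the column sets of arbitrary tableaux $\gamma$ of shape $n^m$, namely $A_\gamma=\prod_i\det A_{\mathrm{col}_i(\gamma)}$. As a $\Sigma_{mn}$-module this span is the Specht module (up to a sign twist) attached to $n^m$. The Garnir relations reduce it to the basis $A_\alpha$, with $\alpha$ running over standard tableaux. Similarly, the $SL(W)$-invariants are spanned by the $B_\beta$, with $\beta$ standard of shape $m^n$, and form the Specht module of the conjugate shape. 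Since $\Delta$ is invariant under both groups, we get an expansion $\det(A\star B)=\sum_{\alpha,\beta}c_{\alpha\beta}A_\alpha B_\beta$ with rational coefficients $c_{\alpha\beta}$.

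Next I pin down the coefficients using property (ii). The tensor product $S^{n^m}\otimes S^{m^n}$ contains the sign representation exactly once, because $S^{m^n}\cong S^{n^m}\otimes\varepsilon$. So the matrix $(c_{\alpha\beta})$ is determined up to a scalar by sign-equivariance. Fayers (\cite[Lemma 4.5]{Fayers}) supplies exactly such an equivariant nondegenerate pairing $\lf\alpha,\beta\rf$ between the two Specht modules. An equivariant pairing $S\times S'\to\varepsilon$ corresponds, via the dual bases $\alpha^\vee,\beta^\vee$, to the equivariant element of $S\otimes S'$ given by the inverse transpose of its Gram matrix. The factors $\varepsilon(\alpha),\varepsilon(\beta)$ account for the ordering of entries within each column when forming $A_\alpha, B_\beta$, as opposed to the signed tableau conventions in Fayers. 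This gives $c_{\alpha\beta}=\lambda\,\Phi_{\alpha^\vee,\beta^\vee}$ for some scalar $\lambda$.

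Finally I fix $\lambda=1$ by evaluating at one configuration. I take rows such that $A\star B$ is a permutation matrix, namely $\underline{a}_i=e_{\lfloor\cdot\rfloor}$, $\underline{b}_i=e_{\lceil\cdot\rceil}$ arranged according to $1_{n^m}$. Then only the term with $\alpha=1_{n^m}$ and $\beta=1_{n^m}'$ survives, since all other column sets give repeated rows and hence vanishing minors. Alternatively I compare with the Kronecker case \eqref{KroneckerCase}. I expect the main obstacle to be this convention-matching: checking that Fayers' sign-equivariance (with the row/column discrepancy noted in the footnote) really yields $(\mathcal{E}F\mathcal{E})^*$ and not its transpose or a sign-twisted version. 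I would verify it on the $(2,2)$ example, where $\Phi=\mathrm{diag}(1,-1)$.
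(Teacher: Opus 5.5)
Your outline is the paper's argument in dual, invariant-theoretic form. You expand $\det(A\star B)$ in the products $A_\alpha B_\beta$ via Schur--Weyl, use that $\varepsilon$ occurs exactly once in $S^{n^m}\otimes S^{m^n}$, and read off the coefficients as the inverse transpose of the Gram matrix of Fayers' pairing. Your explicit normalisation step is more careful than the paper's appeal to a ``canonical'' isomorphism. However, two steps do not go through as written.

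\emph{Normalisation.} Take your configuration: $a_i=e_k$, $b_i=f_l$ when $i$ sits in row $k$, column $l$ of $1_{n^m}$, so that $A\star B=I$. It is false that all other column sets give repeated rows on the $A$-side. For $m=3$, $n=2$, the tableau $\alpha_5$ has columns $\{1,3,5\},\{2,4,6\}$, and $A_{\alpha_5}=\det(e_1,e_3,e_2)\det(e_2,e_1,e_3)=1$.

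What is true is the $B$-side statement. If $B_\beta\neq 0$, every column of $\beta$ meets each block $\{(l-1)m+1,\dots,lm\}$ exactly once. Column-strictness then puts the $l$-th entry of each column in the $l$-th block, so row $l$ of $\beta$ is that block, i.e.\ $\beta=(1_{n^m})'$.

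You still need $\Phi_{\alpha,(1_{n^m})'}=\delta_{\alpha,1_{n^m}}$. The same transversal argument shows that the row of $F$ indexed by $1_{n^m}$ is the unit vector at $(1_{n^m})'$. Inverting and transposing turns this into the required column of $\Phi$. Alternatively, use the triangularity in \cite[Lemma 4.5]{Fayers}.

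\emph{The sign twist.} This is the only place where the specific matrix $\Phi=(\mathcal{E}F\mathcal{E})^*$ is pinned down, and you only assert it. Your proposed $(2,2)$ check cannot detect it: there $F$ is diagonal and $\varepsilon(\alpha)\varepsilon(\alpha')=1$, so $F^*=(\mathcal{E}F\mathcal{E})^*$. The first case where it matters is $m=3$, $n=2$ (labels as in the paper's example). Take $a=(e_1,e_2,e_3,e_1,e_2,e_3)$ and $b=(f_1,f_2,f_1,f_2,f_1,f_2)$.
\begin{itemize}
\item $A\star B$ is the permutation matrix of the transposition $(2\,5)$, so $\det(A\star B)=-1$.
\item The only nonzero terms are $A_{\alpha_1}=A_{\alpha_5}=1$, $B_{\beta_1}=-1$ and $B_{\beta_5}=1$.
\item $\Phi$ gives $B_{\beta_1}+B_{\beta_5}-B_{\beta_5}=-1$, which is correct.
\item $F^*$, whose $(1,5)$ entry is $-1$, gives $-3$, which is wrong.
\end{itemize}

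A convention-free proof goes as follows. Let $Q$ be a nonzero sign-equivariant pairing on $\mathrm{span}(A_t)\times\mathrm{span}(B_s)$. It exists because $S^{m^n}\cong (S^{n^m})^\vee\otimes\varepsilon$, and it is nondegenerate by irreducibility.
\begin{itemize}
\item If $i,j$ share a column of $t$ and a column of $s$, applying the transposition $(i\,j)$ gives $Q(A_t,B_s)=-Q(A_t,B_s)$, so $Q(A_t,B_s)=0$.
\item Otherwise, let $\gamma$ be as in the definition of $\lf\cdot,\cdot\rf$, and let $\pi_t,\pi_s$ be the column permutations taking $t$ to $\gamma$ and $s$ to $\gamma'$. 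Then $Q(A_t,B_s)=\varepsilon(\pi_t)\varepsilon(\pi_s)\varepsilon(\gamma)\,Q(A_{1_{n^m}},B_{(1_{n^m})'})$.
\item For standard $\alpha,\beta$ one has $\varepsilon(\pi_\alpha)=\varepsilon(\gamma)\varepsilon(\alpha)$ and $\varepsilon(\pi_\beta)=\varepsilon(\gamma)\varepsilon(\beta)$. Here $\varepsilon(\beta)$ is measured against $(1_{n^m})'$, which is the convention the paper's examples implicitly use.
\end{itemize}
Hence the Gram matrix of $Q$ is a multiple of $\mathcal{E}F\mathcal{E}$. This makes your ``ordering within columns'' remark precise and avoids Fayers' module conventions entirely. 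It also gives the invertibility of $F$ for free.
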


Note that the entries of  $\Phi$, which is the inverse of a quasi-unipotent matrix with integer entries, are also integers. However,  contrary to the matrix $F$, they  are not all equal  $0, -1, 1$. For example, when $m=n=3$ a   handful of entries of $\Phi$  are equal to $-2$ or $2$. The entries  $\Phi_{\alpha_1^{\vee}, \alpha_n^{\vee}}$ may be interpreted as a signed sum over  chains of relations $\alpha_1\sim \alpha_2 \sim \ldots \sim \alpha_n$ where $\alpha \sim \beta$ if $\lf \alpha, \beta' \rf$ is non-zero. 

\begin{rem} The identity may be reinterpreted as a bilinear form. Let
\[ \underline{A} =  A_{\alpha} \quad , \quad   \underline{B} =  B_{\beta}\]
denote the column vectors whose entries are the  $A_{\alpha}$
where   $\alpha$ (respectively $\beta$) range over the set of standard Young tableaux of shape $m^n$ (resp. $n^m$). Then theorem \ref{thm: detAB}  
states that:
\[  \det(A\star B) =  \underline{A}^T  \Phi\,  \underline{B}\ .\]
\end{rem} 

The previous theorem has the advantage that it contains very few terms, but the disadvantage that it involves inversion of a matrix. 
The following theorem is more symmetric, but typically involves a far higher number of terms. 

\begin{thm}  \label{thm: detABSymmetric}  Let  $\alpha,\beta$ be  standard tableaux of shape $n^m$ and $m^n$ respectively.
Then there is an integer $\kappa_{\alpha,\beta}$ such that  
\[ \det(A\star B)  \, \kappa_{\alpha, \beta}  =    \sum_{\sigma\in \Sigma_{mn}} \varepsilon(\sigma) A_{\sigma\alpha} B_{\sigma \beta} \ . \]
The integer $\kappa_{\alpha,\beta}$ vanishes if and only if $\lf \alpha, \beta\rf =0$.

When  $\alpha=1_{n^m}$ and $\beta=1'_{m^n}$ is its conjugate, then for $m\geq n$ 
\[\kappa_{\alpha,\beta} =  H_{m,n} =   \prod_{i=0}^{n-1}  \frac{(m+i)!}{i!}  \] is the product of the Hook lengths of a tableau of shape $n^m$.
\end{thm}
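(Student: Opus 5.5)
The strategy is to show that both sides are semi-invariants of a single kind, and that the space of such semi-invariants is one-dimensional. Treat the entries of $A$ and $B$ as indeterminates. A polynomial that is multilinear in the $mn$ pairs of rows $(\underline{a}_i,\underline{b}_i)$ is then an element of $(V^{\otimes mn})^{\vee}\otimes (W^{\vee})^{\otimes mn}$, where $V=\Q^m$ and $W=\Q^n$. On this space $\Sigma_{mn}$ acts diagonally by permuting the row indices, and $GL(V)\times GL(W)$ acts on the columns. The steps are as follows.

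\textbf{Step 1 (the functions $A_\alpha$, $B_\beta$ as Specht modules).} Each $A_\alpha=\prod_i \det A_{\mathrm{col}_i(\alpha)}$ transforms under $GL(V)$ by the character $\det^n$. By Schur--Weyl duality, the $\det^n$-isotypic part of $(V^{\vee})^{\otimes mn}$ is an irreducible $\Sigma_{mn}$-module, the Specht module attached to the rectangle. The $A_\alpha$, for $\alpha$ standard, form its standard polytabloid basis: $A_\alpha$ is the column-antisymmetrised function attached to $\alpha$. The same holds for the $B_\beta$, with $W$ in place of $V$ and the conjugate shape in place of the rectangle.

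\textbf{Step 2 (one-dimensionality).} The sum $\sum_\sigma \varepsilon(\sigma)A_{\sigma\alpha}B_{\sigma\beta}$ is $\Sigma_{mn}$-anti-invariant and has $GL(V)\times GL(W)$-character $\det_V^n\det_W^m$. Hence it lies in the sign-isotypic part of $S^{\lambda}\otimes S^{\lambda'}$, which is one-dimensional. The function $\det(A\star B)$ lies in the same space. It is multilinear in the pairs of rows, alternating, and has the right weights, since replacing $A$ by $Ag$ multiplies $A\star B$ on the right by $g\otimes 1$, whose determinant is $\det(g)^n$. It is nonzero, as the Kronecker case \eqref{KroneckerCase} shows. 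Therefore the sum equals $\kappa_{\alpha,\beta}\det(A\star B)$ for a scalar $\kappa_{\alpha,\beta}\in\Q$.

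\textbf{Step 3 (integrality).} The right-hand side has integer coefficients. Moreover $\det(A\star B)$ contains a monomial with coefficient $\pm1$: the specialisation below makes $A\star B$ a permutation matrix, so $\det(A\star B)$ has a nonzero evaluation at a $0/1$ point where exactly one monomial survives. Comparing coefficients of that monomial gives $\kappa_{\alpha,\beta}\in\Z$.

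\textbf{Step 4 (vanishing criterion).} The map $(\alpha,\beta)\mapsto\kappa_{\alpha,\beta}$ extends to a $\Sigma_{mn}$-invariant bilinear pairing $S^{\lambda}\times S^{\lambda'}\to \mathrm{sgn}$. Such a pairing is unique up to a scalar. Fayers' form $\lf\ ,\ \rf$ is a nonzero pairing of this type: it is invertible by \cite[Lemma 4.5]{Fayers}. Hence $\kappa_{\alpha,\beta}=c\,\lf\alpha,\beta\rf$ for a constant $c$, and $c\neq0$ because $\kappa_{1_{n^m},1'_{m^n}}\neq 0$ by Step 5. This gives: $\kappa_{\alpha,\beta}=0$ if and only if $\lf\alpha,\beta\rf=0$. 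I will need to be careful here about Fayers' row/column convention (see the footnote) and about the extra signs $\varepsilon(\alpha)$, $\varepsilon(\beta)$. These affect only the constant, not the vanishing pattern.

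\textbf{Step 5 (the value $H_{m,n}$, the main obstacle).} I would evaluate both sides at the grid specialisation. Index the rows of $A\star B$ by the boxes $(i,j)$ of the rectangle via $1_{n^m}$, and set $\underline{a}_{(i,j)}=e_i$ and $\underline{b}_{(i,j)}=f_j$, with $(e_i)$ and $(f_j)$ the standard bases. Then $A\star B$ is a permutation matrix (the identity, with the convention \eqref{KroneckerTensorConvention}), so $\det(A\star B)=1$.

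For $\alpha=1_{n^m}$, the term $A_{\sigma\alpha}$ is nonzero exactly when each column of $\sigma 1_{n^m}$ contains one box from each grid row. Likewise $B_{\sigma\beta}$ is nonzero exactly when each row of $\sigma 1_{n^m}$ contains one box from each grid column. When nonzero, each factor is a sign. The sum is thus a signed count of permutations $\sigma$ satisfying both conditions, which I would organise as an element of the form $b_\lambda a_\lambda$ in the group algebra.

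The cleanest route is to identify $\sum_\sigma \varepsilon(\sigma)A_{\sigma\alpha}B_{\sigma\beta}$ with the coefficient of the identity in the square of the Young symmetriser $e_\lambda$. The identity $e_\lambda^2=\frac{(mn)!}{f^\lambda}e_\lambda$, together with the hook length formula, then gives $\frac{(mn)!}{f^\lambda}=\prod_{\text{boxes}} h=H_{m,n}$. I expect the main difficulty to be checking that all the signs line up: the transpose of the tableau, the sign $\varepsilon(\gamma)$, and the Kronecker ordering convention. The aim is that every contributing term enters with the sign $+1$, so that the count is exactly $H_{m,n}$ and not merely equal to it up to sign.
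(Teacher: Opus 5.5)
Your proposal is correct. Its skeleton is the paper's: a one-dimensional sign line in $S^{\lambda}\otimes S^{\lambda'}$, Fayers' pairing for the vanishing criterion, and the Kronecker specialisation \eqref{KroneckerCase} for the value. Your grid specialisation is exactly the case $C=I_m$, $D=I_n$. Several pieces are proved differently, though.

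\emph{Route to the identity.} The paper forms $\rho_{\alpha,\beta}$ in the abstract Specht modules and writes it as $\kappa_{\alpha,\beta}$ times the canonical vector $\sum\Phi_{\gamma^\vee,\delta^\vee}\,\gamma\otimes\delta$. It then needs Theorem \ref{thm: detAB} to identify the image of that vector with $\det(A\star B)$. You work with polynomial functions throughout and never need $\Phi$. Your Step 2 can even be simplified: anything linear in each $\underline a_i\otimes\underline b_i$ and alternating lies in the line $(\bwedge^{mn}(V\otimes W))^{\vee}$, so neither weights nor Schur--Weyl are needed there.

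\emph{Integrality.} Your argument is cleaner. Evaluating at the grid point, where $\det(A\star B)=1$, exhibits $\kappa_{\alpha,\beta}$ directly as an integer. The paper's ``defined over $\Z$'' only gives $\kappa_{\alpha,\beta}\Phi_{\gamma^\vee,\delta^\vee}\in\Z$ for all $\gamma,\delta$, and tacitly uses that $\Phi$ has $\pm1$ diagonal entries.

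\emph{The value $H_{m,n}$.} The paper cites Fayers' Lemma 4.6 for $\lf e_{1_\lambda},e_{1'_\lambda}\rf$. You use $e_\lambda^2=\frac{(mn)!}{f^\lambda}e_\lambda$ together with the hook length formula. The price is that your vanishing criterion needs $c\neq0$ from Step 5, so Step 5 must come first. The paper instead gets non-vanishing from $\rho_{\alpha,\beta}\mapsto(mn)!\lf\alpha,\beta\rf$, without knowing $H_{m,n}$.

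\emph{The sign check you flagged.} It works out. Let $R,C$ be the row and column groups of $1_{n^m}$ acting on boxes. Let $\hat\sigma$ be the permutation of boxes induced by $\sigma$; it is conjugate to $\sigma$, so $\varepsilon(\hat\sigma)=\varepsilon(\sigma)$. Then $A_{\sigma\alpha}B_{\sigma\beta}\neq0$ exactly when $\hat\sigma\in RC\cap CR$. Write $\hat\sigma=pq=q'p'$ with $p,p'\in R$ and $q,q'\in C$. One gets $A_{\sigma\alpha}=\mathrm{sgn}(q)$ and $B_{\sigma\beta}=\mathrm{sgn}(p')$, so the term is $\varepsilon(\hat\sigma)\,\mathrm{sgn}(q)\,\mathrm{sgn}(p')=\mathrm{sgn}(q)\,\mathrm{sgn}(q')$. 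Now set $a=\sum_{R}p$ and $b=\sum_{C}\mathrm{sgn}(q)\,q$. The term above is precisely the contribution of $(p_1,q_1,p_2,q_2)=(p^{-1},q',p',q^{-1})$ to the coefficient of $1$ in $(ab)^2$. Since $ab$ has identity coefficient $1$, the sum is $+(mn)!/f^\lambda=+H_{m,n}$, with the correct sign.

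\emph{Doing Step 4 by hand.} This avoids your convention worries about Fayers' definition.
\begin{enumerate}
\item Suppose $i,j$ share a column of $\alpha$ and a column of $\beta$. Then $A_{\sigma(i\,j)\alpha}=-A_{\sigma\alpha}$ and $B_{\sigma(i\,j)\beta}=-B_{\sigma\beta}$. Reindexing $\sigma\mapsto\sigma(i\,j)$ shows the sum is its own negative, so it vanishes.
\item Otherwise, let $\gamma=\pi 1_{n^m}$ be the unique tableau in Fayers' definition. Column antisymmetry gives $A_\alpha=\pm A_{\pi 1_{n^m}}$ and $B_\beta=\pm B_{\pi 1'_{m^n}}$. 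Hence $\kappa_{\alpha,\beta}=\pm\varepsilon(\pi)H_{m,n}$.
\end{enumerate}
This proves the vanishing criterion without Fayers' Proposition 4.8. It also shows $|\kappa_{\alpha,\beta}|\in\{0,H_{m,n}\}$.
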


\subsection{Examples}

\subsubsection{$m=n=2$}We may write explicitly 
\[ A= \begin{pmatrix}
a_{11}   &   a_{12}      \\ 
a_{21}   &   a_{22}      \\
a_{31}  &   a_{32}     \\
a_{41}   &   a_{42}    
\end{pmatrix} 
\\  \quad , \quad B= \begin{pmatrix}
b_{11}  &    b_{12}    \\ 
b_{21}  &    b_{22}     \\
b_{31}  &     b_{32}    \\
b_{41}  &     b_{42}    
\end{pmatrix} 
\]
and hence
\[ A\star B = 
\begin{pmatrix}
a_{11}b_{11}  &   a_{12} b_{11}  & a_{11}b_{12}  &   a_{12} b_{12}   \\ 
a_{21}b_{21}  &   a_{22} b_{21}  & a_{21}b_{22}  &   a_{22} b_{22}   \\
a_{31}b_{31}  &   a_{32} b_{31}  & a_{31}b_{32}  &   a_{32} b_{32}    \\
a_{41}b_{41}  &   a_{42} b_{41}  & a_{41}b_{42}  &   a_{42} b_{42}    
\end{pmatrix} 
 \]

There are exactly two standard Young tableaux of shape $(2,2)$, namely:
\[   \underbrace{\begin{Young}
1 & 3 \cr
2  & 4 \cr
\end{Young}}_{\alpha_1}    \quad   , \quad  \underbrace{\begin{Young}
1 & 2 \cr
3  & 4 \cr
\end{Young}}_{\alpha_2} \ .\]
where $\alpha_1 = 1_{(2,2)}$ has sign $1$ and $\alpha_2$ has sign $-1$. 
With respect to the bases $\alpha_1, \alpha_2$ and $\beta_1=\alpha_1', \beta_2=\alpha_2'$,   the entries of the Fayers matrix $(F)_{ij} = \lf \alpha_i, \alpha'_j \rf$. We have
\[ 
F = \begin{pmatrix}  1 & 0 \\ 
0 & -1  \end{pmatrix} 
\qquad   \hbox{ and }  \qquad \Phi =\left( \begin{pmatrix}  1 & 0 \\ 
0 & -1  \end{pmatrix} F  \begin{pmatrix}  1 & 0 \\ 
0 & -1  \end{pmatrix} \right)^* = \begin{pmatrix}  1 & 0 \\ 
0 & -1  \end{pmatrix}   \ . 
\]
Theorem \ref{thm: detAB}  states that 
 \begin{eqnarray}  \det( A\star B)   & = & A_{\alpha_1} B_{\alpha'_1} - A_{\alpha_2} B_{\alpha_2'} \nonumber \\
 &=  &  \det A_{12} \det A_{34}  \det B_{13} \det B_{24} -  \det A_{13} \det A_{24}  \det B_{12} \det B_{34}     \nonumber \\ 
 &
 = & \begin{vmatrix} a_{11}& a_{12} \\ a_{21} & a_{22} \end{vmatrix}  \begin{vmatrix} a_{31}& a_{32} \\ a_{41} & a_{42} \end{vmatrix} 
  \begin{vmatrix} b_{11}& b_{12} \\ b_{31} & b_{32} \end{vmatrix}  \begin{vmatrix} b_{21}& b_{22} \\ b_{41} & b_{42} \end{vmatrix}    \nonumber\\
  &&   -   
     \begin{vmatrix} a_{11}& a_{12} \\ a_{31} & a_{32} \end{vmatrix}  \begin{vmatrix} a_{21}& a_{22} \\ a_{41} & a_{42} \end{vmatrix}  
    \begin{vmatrix} b_{11}& b_{12} \\ b_{21} & b_{22} \end{vmatrix}  \begin{vmatrix} b_{31}& b_{32} \\ b_{41} & b_{42} \end{vmatrix}  \nonumber
  \end{eqnarray}

\subsubsection{ $m=3$ and $n=2$}We have:
\[ A= \begin{pmatrix}
a_{11}   &   a_{12}   & a_{13}    \\ 
a_{21}   &   a_{22}    & a_{23}  \\
a_{31}  &   a_{32}      &a_{33} \\
a_{41}   &   a_{42} & a_{43} \\
a_{51}   &   a_{52} & a_{53} \\
a_{61}   &   a_{62} & a_{63}     
\end{pmatrix} 
\\  \quad , \quad B= \begin{pmatrix}
b_{11}  &    b_{12}    \\ 
b_{21}  &    b_{22}     \\
b_{31}  &     b_{32}    \\
b_{41}  &     b_{42}    \\
b_{51}  &     b_{52}    \\
b_{61}  &     b_{62}    
\end{pmatrix} 
\]
and hence
\[ A\star B = 
\begin{pmatrix}
a_{11}b_{11}  &   a_{12} b_{11}    & a_{13} b_{11}   & a_{11}b_{12}  &   a_{12} b_{12} &  a_{13} b_{12}  \\ 
a_{21}b_{21}  &   a_{22} b_{21}   & a_{23} b_{21}  & a_{21}b_{22}  &   a_{22} b_{22}   &  a_{23} b_{22}  \\
a_{31}b_{31}  &   a_{32} b_{31}   & a_{33} b_{31}  & a_{31}b_{32}  &   a_{32} b_{32} &   a_{33} b_{32}   \\
a_{41}b_{41}  &   a_{42} b_{41}    & a_{43} b_{41} & a_{41}b_{42}  &   a_{42} b_{42}  & a_{43} b_{42}   \\
a_{51}b_{51}  &   a_{52} b_{51}    & a_{53} b_{51} & a_{51}b_{52}  &   a_{52} b_{52}  & a_{53} b_{52}   \\
a_{61}b_{61}  &   a_{62} b_{61}    & a_{63} b_{61} & a_{61}b_{62}  &   a_{62} b_{62}  & a_{63} b_{62}   \\
\end{pmatrix} 
 \]

The five standard Young tableaux of shape $(2,2,2)$ are:
\[  \underbrace{\begin{Young}
1 & 4 \cr
2  & 5 \cr
3  & 6 \cr
\end{Young}}_{\alpha_1} \quad   , \quad  
\underbrace{\begin{Young}
1 & 3 \cr
2  & 5 \cr
4  & 6 \cr
\end{Young}}_{\alpha_2}
\quad   , \quad  \underbrace{\begin{Young}
1 & 3 \cr
2  & 4 \cr
5  & 6 \cr
\end{Young}}_{\alpha_3}
 \quad   , \quad  
 \underbrace{\begin{Young}
1 & 2 \cr
3  & 5 \cr
4  & 6 \cr
\end{Young}}_{\alpha_4} 
 \quad   , \quad  
\underbrace{\begin{Young}
1 & 2 \cr
3  & 4 \cr
5  & 6 \cr
\end{Young}}_{\alpha_5} 
 \]
Their conjugates $\beta_i = \alpha'_i$ are as follows:
 \[  
\underbrace{\begin{Young}
1 & 2  & 3 \cr
 4 & 5 & 6 \cr
\end{Young}}_{\beta_1}  \quad   , \quad 
 \underbrace{\begin{Young}
1 & 2 & 4 \cr
3 & 5 & 6  \cr
\end{Young}}_{\beta_2}
 \quad   , \quad 
\underbrace{\begin{Young}
1 &2& 5\cr 
3 & 4 & 6 \cr
\end{Young}}_{\beta_3} \quad   , \quad 
 \underbrace{\begin{Young}
1 & 3& 4 \cr
2  & 5 & 6 \cr
\end{Young}}_{\beta_4} 
 \quad   , \quad  \underbrace{\begin{Young}
1 & 3 & 5 \cr
2  & 4 & 6 \cr
\end{Young}}_{\beta_5}
\]
 The Fayers matrix $F_{ij} = \lf \alpha_i,\beta_j\rf$ and the  matrix $\Phi$ are explicitly:
 \[ 
F= \begin{pmatrix} 
      1   &   0 & 0 &0  &0  \\
    0 &  -1   & 0 &0  &0 \\
  0   & 0   &  1 &0 &0 \\
  0  &  0    & 0 & 1 & 0 \\
   -1 & 0    &0  &0  & -1 
\end{pmatrix}
\quad , \quad
\Phi= \begin{pmatrix} 
      1   &   0 & 0 &0  &1 \\
    0 &  -1   & 0 &0  &0 \\
  0   & 0   &  1 &0 &0 \\
  0  &  0    & 0 & 1 & 0 \\
   0 & 0    &0  &0  & -1 
\end{pmatrix}
\]
The bottom left corner entry $F_{5,1}$ comes from the fact that there is\footnote{this tableau is easily found: its entry in row $i$ and column $j$  is   the intersection $\mathrm{col}_i(\beta_1) \cap \mathrm{row}_j(\alpha_5)$ of  the set of numbers in column $i$ of $\beta_1$, with the set of numbers in column $j$ of $\alpha_5$.} a  tableau
\[    
\begin{Young}
1 & 4 \cr
5 & 2 \cr
3  & 6 \cr
\end{Young}
\]
which is not standard, but has  the properties that it is column-equivalent to $\alpha_5$, its conjugate is column-equivalent to $\beta_1$, and has sign $-1$.
Theorem \ref{thm: detAB} states that 
\[ \det(A \star B) = A_{\alpha_1} \left(  B_{\beta_1}   + B_{\beta_5}\right) - A_{\alpha_2} B_{\beta_2} + A_{\alpha_3}B_{\beta_3} + A_{\alpha_4} B_{\beta_4} - A_{\alpha_5} B_{\beta_5} \]
and thus
 \begin{eqnarray}  \det( A\star B)    =      &  + &  \det A_{123}  \det A_{456}  \,  \det B_{14} \det B_{25} \det B_{36}   \nonumber  \\
  &  + &  \det A_{123}  \det A_{456}  \,  \det B_{12} \det B_{34} \det B_{56}   \nonumber  \\
  & -  &  \det A_{124}  \det A_{356}  \,  \det B_{13} \det B_{25} \det B_{46}   \nonumber  \\
   &  + &  \det A_{125}  \det A_{346}  \,  \det B_{13} \det B_{24} \det B_{56}   \nonumber  \\
 & +  & \det A_{134} \det A_{256}  \,  \det B_{12} \det B_{35} \det B_{46}   \nonumber  \\
      &     - & \det A_{135} \det A_{246}  \,  \det B_{12} \det B_{34} \det B_{56} \nonumber 
  \end{eqnarray}

%
%
%
%

\subsection{Proof of theorem \ref{thm: detAB}}
Let $V, W$ be vector spaces  over $\Q$  of dimensions $m$ and $n$ respectively.  Consider a set of $mn$ vectors $v_1,\ldots, v_{mn} \in V$ and  $w_1, \ldots, w_{mn} \in W$. If we fix a basis $e_1,\ldots, e_m$ of $V$ and $f_1,\ldots, f_n$ of $W$, the vectors $v_i$ and $w_j$   for $1\leq i,j\leq mn$ may be written 
\[ v_i = a_{i1} e_1 + \ldots + a_{im} e_m\] 
corresponding to the $i^\mathrm{th}$ row of an $mn \times m$ matrix $A$, 
and similarly $w_i =   b_{i1} f_1 + \ldots + b_{in} f_n$ corresponding to the $i^\mathrm{th}$ row of an $mn \times n$ matrix $B$. 

The tensor product $V\otimes W$ has dimension $mn$ and basis $e_i \otimes f_j$ for $1\leq i\leq m$ and $1\leq j \leq n$. The vector $v_i \otimes w_i$ has coordinates
\[ v_i \otimes w_i = a_{i1} b_{i1} \,  e_1\otimes f_1 + \ldots + a_{im} b_{in} \, e_m \otimes f_n \]
which corresponds to the $i^\mathrm{th}$ row of the matrix $A\star B$, see \eqref{KroneckerTensorConvention}.   
Define an element
\[   
\varpi(v,w)   \in    \bwedge^{mn} \left( V\otimes W\right) \]
by the formula
\begin{equation}  \label{inproofomegavw}  
\varpi(v,w) =   \left( v_1\otimes w_1\right) \wedge  \left( v_2\otimes w_2\right) \wedge \ldots \wedge \left( v_{mn}\otimes w_{mn}\right)\ .  
\end{equation} 
Then we have
 \begin{equation} \label{piece1}  \varpi(v,w)   =  \det(A\star B) \,  (e_1 \otimes f_1) \wedge (e_2 \otimes f_1) \wedge \ldots  \wedge (e_m \otimes f_n) \end{equation}
 and therefore,  computing the determinant $\det(A\star B)$ is equivalent to finding an expression for $ \varpi(v,w)$. 
Recall that Schur-Weyl duality states that 
\[ V^{\otimes p} = \bigoplus_{\lambda\vdash p}  \mathbb{S}_{\lambda} V \otimes S^{\lambda}\]
 where the direct sum is over all partitions $\lambda$ of $p$,  $\mathbb{S}_{\lambda} $ denotes the Schur functor associated to $\lambda$, and $S^{\lambda}$ the corresponding Specht module, which  is a representation of the symmetric group $\Sigma_p$ acting upon $V^{\otimes p}$ by permuting the factors. 
One has
 \[ V^{\otimes p} \otimes W^{\otimes p}  = \bigoplus_{\lambda, \mu \vdash p}  \mathbb{S}_{\lambda} V \otimes  \mathbb{S}_{\mu} W  \otimes S^{\lambda} \otimes S^{\mu} \ . \]
The exterior algebra $\bwedge^{p} ( V\otimes W)$ is the subspace of $V^{\otimes p} \otimes W^{\otimes p}$  which corresponds  to the  alternating (sign) representation $\varepsilon_p$ of the copy of the symmetric group $\Sigma_p$ which is embedded diagonally $\Sigma_{p} \hookrightarrow \Sigma_p \times \Sigma_p$, where $\Sigma_p \times \Sigma_p$  acts naturally on $V^{\otimes p} \otimes W^{\otimes p}$. 
 Since $S^{\lambda} \cong (S^{\lambda'} \otimes \varepsilon)^{\vee}$ (see below) it follows from Schur's lemma that the sign representation $\varepsilon$ occurs in 
  the tensor product of Specht modules $S^{\lambda} \otimes S^{\mu}$ if and only if $\mu = \lambda'$, with multiplicity  one,  and hence one has the formula 
   \[  \bwedge^p \left( V \otimes W \right)  = \bigoplus_{\lambda\vdash p}  \mathbb{S}_{\lambda} V \otimes  \mathbb{S}_{\lambda'} W    \ , \]
 where both sides of the equation are naturally embedded  in $V^{\otimes p}\otimes W^{\otimes p}$. In  the case of interest, we  set  $p=mn$ and note that 
 $\mathbb{S}_{\lambda} V$ vanishes if $\lambda$ has more than $m=\dim V$ rows. Similarly  
  $\mathbb{S}_{\lambda'} W$ vanishes if $\lambda$ has more than $n=\dim W$ columns. Since $\lambda \vdash mn$ it follows that the only contributing terms to the right-hand side of the previous formula are those where $\lambda = (n,\ldots, n)$ is a rectangular array with $m$ rows and $n$ columns and hence the formula reduces to a single term
  \[  \bwedge^{mn} \left( V \otimes W \right)  =    \mathbb{S}_{\lambda} V \otimes  \mathbb{S}_{\lambda'} W \quad \hbox{ where } \lambda = n^m\ . \]
   There is a  canonical isomorphism 
   $  \mathbb{S}_{\lambda} V \cong \mathrm{Sym}^n 
  \!\left(\bwedge^m V\right) $, and 
  $\mathbb{S}_{\lambda'} W  \cong  \mathrm{Sym}^m ( \bwedge^n W)$, since both sides of each equation are one-dimensional vector spaces. Consequently, we deduce that there is a canonical  isomorphism of one dimensional  $\Q$-vector spaces:
    \begin{equation} \label{inproof: naturalmap}   \bwedge^{mn} \left( V \otimes W \right)  \overset{\sim}{\To}    \mathbb{S}_{\lambda} V \otimes  \mathbb{S}_{\lambda'} W=\Sym^n \! \left( \bwedge^m V\right)  \otimes  \Sym^m \!\left( \bwedge^n W\right)
   \end{equation} 
which  sends
   \[ (e_1 \otimes f_1) \wedge (e_1 \otimes f_2) \wedge \ldots  \wedge (e_m \otimes f_n)  \mapsto  (e_1 \wedge \ldots \wedge e_m)^n \otimes (f_1 \wedge \ldots \wedge f_n)^m \ . \] 
  We wish to compute the image of \eqref{inproofomegavw} under the map \eqref{inproof: naturalmap}, 
    which  corresponds to the inclusion of the sign representation $\varepsilon_p$ in the tensor product of Specht modules $S^{\lambda} \otimes S^{\lambda'}$.     Recall that $S^{\lambda}$ has a basis indexed by the set of standard Young tableaux $\alpha$ of shape $\lambda$. 
 A standard tableau  $\alpha$ of shape $\lambda = n^m$  corresponds to a linear map:
   \begin{eqnarray} \label{inproof:alphadet} 
   \alpha:  V^{\otimes mn} & \To & \mathbb{S}_{\lambda} V \cong \mathrm{Sym}^n 
  \,  (\bwedge^m V)   \\ 
 v_1 \otimes v_2 \otimes \ldots \otimes v_{mn}  &   \mapsto  &  \prod_{i=1}^n  \bigwedge_{i\in \mathrm{col}_i(\alpha)} v_i   \nonumber 
   \end{eqnarray} 
     where the exterior products are over the set of $m$ elements in each column of $\alpha$ (in increasing order) and the (symmetric) product  is over all $n$ rows. A similar statement holds for  a standard tableau $\beta$ of shape $m^n$. 
 
 It follows from \cite[Proposition 4.8]{Fayers}, that the following  bilinear map, which is defined on standard tableaux $
 \alpha, \beta$ of respective shapes $\lambda, \lambda'$,
 \begin{eqnarray} \label{ProjectStensSToEpsilon}
S^{\lambda} \otimes S^{\lambda'}  & \To & \varepsilon   \\ 
\alpha \otimes \beta & \mapsto & F_{\alpha, \beta} = \lf \alpha, \beta \rf \nonumber 
 \end{eqnarray} 
 is a surjective morphism of Specht modules.  Equivalently, by tensor-Hom adjunction, the matrix $F$ defines an isomorphism of Specht modules:
 \[ F: S^{\lambda'} \overset{\sim}{\To}   (S^{\lambda})^{\vee} \otimes \varepsilon\ .\]
Its inverse transpose  defines an isomorphism
 \[ F^*: \left(S^{\lambda'}\right)^{\vee} \overset{\sim}{\To}  S^{\lambda} \otimes \varepsilon \ . \]
 Again by adjunction, we deduce that the 
 morphism of Specht modules:
 \begin{eqnarray}
\left(S^{\lambda} \right)^{\vee} \otimes 
\left(S^{\lambda'} \right)^{\vee} & \To & \varepsilon\nonumber \\ 
\alpha^{\vee} \otimes \beta^{\vee} & \mapsto & F^*_{\alpha^{\vee}, \beta^{\vee}} \nonumber 
 \end{eqnarray}   
 is computed by  $F^*$.  Consider the composition of maps
 \begin{equation} \label{inproof:counitmaps} 1 \To S^{\lambda} \otimes (S^{\lambda})^{\vee} \otimes S^{\lambda'} \otimes (S^{\lambda'})^{\vee} \To (S^{\lambda}\otimes S^{\lambda'} )\otimes \varepsilon
 \end{equation} 
 where the first map is given by a  tensor product of counits, and the second map contracts the second and fourth terms in the tensor product using  $F^*$.
 It is a morphism of Specht modules, where $1$ denotes the trivial representation.   Recall that the counit $\Q \rightarrow V\otimes V^{\vee}$ for a  finite dimensional vector space $V$ may be represented by 
 $1 \mapsto \sum_{i} v_i \otimes v_i^{\vee}$, where $v_i$ is any basis of $V$, and $v_i^{\vee}$ the dual basis. 
 Applying this to the basis $\{\alpha\}$ for $S^{\lambda}$ and $\{\beta\}$ for $S^{\lambda'}$ we may compute   \eqref{inproof:counitmaps} by 
 \[ 1 \mapsto  \sum_{\alpha,\beta} \alpha \otimes \alpha^{\vee} \otimes \beta \otimes \beta^{\vee} \mapsto  \sum_{\alpha, \beta} F^*_{\alpha^{\vee}, \beta^{\vee}} \, \alpha\otimes \beta \ .   \]
 Finally,  it follows that the map of Specht modules obtained 
  by tensoring  \eqref{inproof:counitmaps} with $\varepsilon$ is given explicitly by the map
 \begin{eqnarray}  \label{varepsilonSection} 
 \varepsilon &  \To   &  S^{\lambda}\otimes S^{\lambda'}  \\
 1 & \mapsto & \sum_{\alpha, \beta} \Phi_{\alpha^{\vee}, \beta^{\vee}}\,  \alpha\otimes \beta  \ ,   \nonumber 
 \end{eqnarray}  
since tensoring with $\varepsilon$ corresponds to multiplying $F^*$ by $\mathcal{E}$ on both the left and the right.   We deduce that the map \eqref{inproof: naturalmap}, which may be rewritten
\[     \bwedge^{mn} \left( V \otimes W \right)  \otimes \varepsilon  \overset{\sim}{\To}    \mathbb{S}_{\lambda} V \otimes S^{\lambda}  \otimes  \mathbb{S}_{\lambda'} W  \otimes S^{\lambda'} \]
 is induced by the natural map
 of Specht modules $ \varepsilon\rightarrow   S^{\lambda}\otimes S^{\lambda'}$, 
 and sends 
 \begin{eqnarray}   \label{piece2} \varpi(v,w) & \mapsto  &  \sum_{\alpha, \beta}  \Phi_{\alpha^{\vee}, \beta^{\vee}} \, (\alpha \otimes \beta )(\varpi(v,w))  \\
  & = &  \sum_{\alpha, \beta}  \Phi_{\alpha^{\vee}, \beta^{\vee}}\,   \prod_{i=1}^n \left( \bigwedge_{i\in \mathrm{col}_i(\alpha)} v_i \right) \,  \prod_{i=1}^m  \left( \bigwedge_{i\in \mathrm{col}_i(\beta)} w_i \right) \nonumber 
 \end{eqnarray} 
  where the second line follows from  \eqref{inproof:alphadet}.
 By definition of the determinant of a matrix, one has 
 \begin{equation} \label{piece3} \prod_{i=1}^n   \bigwedge_{i\in \mathrm{col}_i(\alpha)} v_i  = \left( \prod_{i=1}^n  \det A_{\mathrm{col}_i(\alpha)}  \right)   e_1\wedge e_2 \wedge \ldots \wedge e_n
 \end{equation}
 and a similar statement for $B$. Putting the pieces \eqref{piece1},\eqref{piece2},\eqref{piece3} together  proves the theorem. 
 
  \subsection{Proof of theorem \ref{thm: detABSymmetric}}
 Follow the argument of the previous paragraph but instead replace the morphism  \eqref{varepsilonSection}
with  a morphism 
  \[ \varepsilon \To S^{\lambda} \otimes S^{\lambda'} \]
constructed as follows. For  standard tableaux $\alpha, \beta$  of shape $\lambda, \lambda'$ respectively,  define
\begin{eqnarray} 
\rho_{\alpha,\beta}  & \in &  S^{\lambda} \otimes S^{\lambda'} \nonumber \\
\rho_{\alpha,\beta} & = & \sum_{\sigma \in \Sigma_{mn}}  \varepsilon(\sigma) \sigma(\alpha) \otimes \sigma(\beta)\nonumber
\end{eqnarray} 
 The image of $\rho_{\alpha, \beta} $  under the morphism \eqref{ProjectStensSToEpsilon} is equal to $(mn)! \lf \alpha,\beta \rf$, since one has $\lf \sigma\alpha, \sigma \beta\rf = \varepsilon(\sigma) \lf \alpha, \beta \rf$. By Schur's lemma $\rho_{\alpha,\beta}$ is non-zero if and only if $\lf \alpha, \beta \rf$ is non-zero. 
 Since the standard tableaux form a $\Z$-basis for $S^{\lambda}$, it follows from \eqref{inproof:counitmaps} that $\rho_{\alpha,\beta}$ is defined over the integers, and hence there exists an integer $\kappa_{\alpha,\beta}$ such that
 \[ \rho_{\alpha,\beta} = \kappa_{\alpha, \beta} \left( \sum_{\gamma, \delta} \Phi_{\gamma^{\vee}, \delta^{\vee}} \gamma \otimes \delta \right)\ .\]

  For the last part of the theorem,  set $\alpha =1_{\lambda}$ and $\beta = 1'_{\lambda}$ and let $A, B$ be the matrices \eqref{KroneckerCase}. Then $A_{\sigma \alpha}$ is non-zero if and only if $t=\sigma \alpha$  is a tableau which is row equivalent to $1'_{\lambda}$, in which case it equals $ \pm \det(C)^n$ with sign  given by the coefficient of the tabloid $\{t\} $ in $e_{1'_{\lambda}}$ (in the notation of  \cite{Fayers} and references therein).
   Similarly, $B_{\sigma \beta}$ is non-zero if and only if $t'$ is  row-equivalent to $1_{\lambda}$, and equal to $\pm \det(D)^m$ with sign given by the coefficient of $\{t'\}$ in $e_{1_{\lambda}}$.  Since $\lf t,t' \rf = \lf \sigma \alpha, \sigma \beta \rf = \varepsilon(\sigma) \lf \alpha,\beta \rf = \varepsilon(\sigma)$, we deduce that 
   \[ \sum_{\sigma} \varepsilon(\sigma) A_{\sigma \alpha} B_{\sigma \beta} =  \lf  e_{1_{\lambda}}, e_{1'_{\lambda}}  \rf \, \det(C)^n \det(D)^m \ .\]
 By \cite[Lemma 4.6]{Fayers}, the quantity $ \lf   e_{1_{\lambda}}, e_{1'_{\lambda}}  \rf $  is equal to  the product of  the Hook lengths of the diagram $\lambda$. The statement follows since
 $\det(A\star B) = \det(C \otimes D) = \det(C)^n \det(D)^m$. 

  \section{Proofs of theorems \ref{thm: FirstVDMformula} and  \ref{thm: SecondFormula}}
Recall the notation from section \S\ref{sect: firstformula}. Given $N_1,\ldots, N_r$ integers $\geq 2$, and any number $\z_1,\ldots, \z_K$ of vectors of length $r$, denote by 
\[ V_{(N_1,\ldots, N_r)} (\z_1,\ldots, \z_K)\]
the matrix with $N_1N_2\ldots N_r$ columns and $K$ rows whose entries in the $i^{\mathrm{th}}$ row are the monomials in the coordinates of $\z_i$, ordered degree-lexicographically.

\begin{lem} The Vandermonde matrix is an amalgamated  product:
\[V_{(N_1,\ldots, N_r)} (\z_1,\ldots, \z_{mn}) = V_{(N_1,\ldots, N_k)}(\z'_1,\ldots, \z'_{mn}) \star V_{(N_{k+1},\ldots, N_r)}(\z''_1,\ldots, \z''_{mn}) \]
\end{lem}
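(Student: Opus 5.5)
The plan is to compare the two matrices row by row, and then column by column. Fix a row index $i$. By definition, the $i^{\mathrm{th}}$ row of $V_{(N_1,\ldots,N_k)}(\z'_1,\ldots,\z'_{mn})$ is the vector $\underline{a}_i=(e'_1(\z'_i),\ldots,e'_m(\z'_i))$. Here $e'_1,\ldots,e'_m$ are the monomials in $z^{(1)},\ldots,z^{(k)}$ of degree $\leq N_j-1$ in each $z^{(j)}$, with $m=N_1\cdots N_k$. Likewise the $i^{\mathrm{th}}$ row of the second factor is $\underline{b}_i=(e''_1(\z''_i),\ldots,e''_n(\z''_i))$, with $n=N_{k+1}\cdots N_r$.

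By the convention \eqref{KroneckerTensorConvention}, the $i^{\mathrm{th}}$ row of the amalgam is the vector of products $e'_p(\z'_i)\,e''_q(\z''_i)$. These are listed with $q$ as the outer index and $p$ as the inner index. The key observation is that the map
\[
(e',e'')\longmapsto e'e''
\]
is a bijection from pairs of monomials (one in the first $k$ variables, one in the last $r-k$ variables, each with the stated degree bounds) onto the set $E_{\underline{N}}$ of monomials in $r$ variables of degree $\leq N_j-1$ in each $z^{(j)}$. This holds because the variables of the two factors are disjoint, so a monomial in all $r$ variables factors uniquely into its $\z'$-part and its $\z''$-part. Moreover, $e'_p(\z'_i)e''_q(\z''_i)$ is exactly the value of the product monomial at $\z_i=(\z'_i,\z''_i)$. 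Hence both matrices consist of the same columns, each of the form $(e(\z_1),\ldots,e(\z_{mn}))^T$ for $e\in E_{\underline{N}}$, and the rows agree entrywise up to this column indexing.

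The remaining step, which is the only real point to check, is the ordering of the columns. The Kronecker order is lexicographic in the pair $(q,p)$. The order on $E_{\underline{N}}$ is degree-lexicographic. These orders agree in small cases such as $\underline{N}=(2,2)$, where both give $(1,z^{(1)},z^{(2)},z^{(1)}z^{(2)})$. They do not agree in general: for $\underline{N}=(3,2)$ the Kronecker order is $(1,x,x^2,y,xy,x^2y)$, while the degree-lexicographic order is $(1,x,y,x^2,xy,x^2y)$.

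I would therefore prove the lemma in the following precise form. There is a permutation $\pi$ of $\{1,\ldots,mn\}$, depending only on $(N_1,\ldots,N_r)$ and $k$ and not on the points $\z_i$, such that the columns of $V_{\underline{N}}$ are those of the amalgam permuted by $\pi$. Equivalently, the identity holds on the nose once $E_{\underline{N}}$ is ordered compatibly with the tensor factorisation. I would also verify that this reordering is harmless for the applications.

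For the determinant this gives
\[
\det V_{\underline{N}}=\varepsilon(\pi)\,\det\!\big(V_{(N_1,\ldots,N_k)}\star V_{(N_{k+1},\ldots,N_r)}\big).
\]
The global sign $\varepsilon(\pi)$ can be absorbed into the integers $\Phi_{\alpha,\beta}$ in Theorem~\ref{thm: FirstVDMformula}. In Theorem~\ref{thm: SecondFormula} it is fixed by normalising. Alternatively, one can note that the recursive ordering by tensor factors is itself a legitimate choice of ordering for the multivariable Vandermonde matrix, and adopt it.

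Thus the substance of the lemma is the unique factorisation of monomials into their $\z'$- and $\z''$-parts. The main obstacle is purely the bookkeeping of the column orders. I would handle it by making the permutation $\pi$ explicit and noting that it is independent of the rows.
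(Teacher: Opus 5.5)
Your argument is correct. Its core is the same as the paper's one-line proof: a monomial in $\z=(\z',\z'')$ factors uniquely into its $\z'$-part and its $\z''$-part. Hence each row of $V_{\underline{N}}$ has exactly the entries of the Kronecker product of the corresponding rows of the two smaller matrices.

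The difference is that you check the column order, and you are right to do so. The paper simply asserts that the degree-lexicographically ordered monomial vector equals the Kronecker product in the convention \eqref{KroneckerTensorConvention}, and this is false in general. Besides your $(3,2)$ example, it already fails for $\underline{N}=(2,2,2)$, where $z^{(1)}z^{(2)}$ and $z^{(3)}$ are interchanged. The paper's own Example \ref{example:V32} in fact writes $V_{3,2}$ with columns $(1,x,x^2,y,xy,x^2y)$, which is the Kronecker order. So the lemma holds exactly only for a tensor-compatible ordering. Otherwise it holds up to a fixed column permutation $\pi$, precisely as you formulate it.

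Of your two repairs, adopting the compatible ordering is the cleaner one. Order $\prod_j (z^{(j)})^{i_j}$ by the integer $i_1+N_1i_2+\cdots+N_1\cdots N_{r-1}i_r$, so that $z^{(1)}$ varies fastest. This order is compatible with the split at every $k$ simultaneously, so the recursive reduction to one variable is sign-consistent. Theorems \ref{thm: FirstVDMformula} and \ref{thm: SecondFormula} then follow from Theorems \ref{thm: detAB} and \ref{thm: detABSymmetric} with no sign correction.

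If you keep the degree-lexicographic convention instead, the sign in Theorem \ref{thm: SecondFormula} cannot be fixed by normalising. Its right-hand side must be multiplied by $\varepsilon(\pi)$, which is $-1$ for $\underline{N}=(3,2)$. In Theorem \ref{thm: FirstVDMformula} the sign can indeed be absorbed into the unspecified integers. For the transfinite diameter only $|\det V_{\underline{N}}|$ enters, so nothing changes there.
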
 

\begin{proof} This holds by definition of $\star$ since 
the degree-lexicographically ordered row vector of monomials $(e_{1},\ldots, e_{mn})(\z)$ in $\z=(\z',\z'')$ is equal to the Kronecker tensor product of the row vectors of monomials
$(e_1,\ldots, e_m)(\z')$ and $(e_1,\ldots, e_n)(\z'')$.  
\end{proof} 
 
Theorems 
 \ref{thm: FirstVDMformula} and  \ref{thm: SecondFormula} therefore follow  from theorems \ref{thm: detAB} and \ref{thm: detABSymmetric} respectively.

\begin{example} \label{example:V32}
We have
\[ V_{3,2}(x,y) =  \begin{pmatrix} 1 & x_1 & x_1^2 &  y_1 & x_1y_1 & x^2_1y_1 \nonumber\\ 
1 & x_2 & x_2^2 &y_2 & x_2y_2& x^2_2y_2 \nonumber\\ 
1 & x_3 & x_3^2& y_3 & x_3y_3& x^2_3y_3 \nonumber\\ 
1 & x_4 & x_4^2 & y_4 & x_4y_4& x^2_4y_4 \nonumber\\ 
1 & x_5 & x_5^2 & y_5 & x_5y_5& x^2_5y_5 \nonumber\\ 
1 & x_6 & x_6^2 & y_6 & x_6y_6& x^2_6y_6 \nonumber\\ 
\end{pmatrix} = \begin{pmatrix} 1 & x_1 & x_1^2  \\
 1& x_2& x_2^2 \\
  1& x_3 & x_3^2 \\ 
  1& x_4 & x_4^2  \\ 
   1& x_5 & x_5^2  \\ 
    1& x_6 & x_6^2  
  \end{pmatrix} \star \begin{pmatrix} 1 & y_1 \\ 1& y_2 \\ 1& y_3 \\ 1& y_4 \\ 1& y_5 \\ 1& y_6 \end{pmatrix}   \]
Consequently, theorem \ref{thm: detAB} implies that 
\[ \det(V_{3,2}(x,y))  = \qquad \qquad \qquad \]
\[ + \Big((x_3-x_1)(x_3-x_2)(x_2-x_1)(x_6-x_4)(x_6-x_5)(x_5-x_4) \Big)\Big((y_4-y_1)(y_5-y_2)(y_6-y_3)\Big)\]
\[ + \Big((x_3-x_1)(x_3-x_2)(x_2-x_1)(x_6-x_4)(x_6-x_5)(x_5-x_4) \Big)\Big((y_2-y_1)(y_4-y_3)(y_6-y_5)\Big)\]
\[ - \Big((x_4-x_1)(x_4-x_2)(x_2-x_1)(x_6-x_3)(x_6-x_5)(x_5-x_3) \Big)\Big((y_3-y_1)(y_5-y_2)(y_6-y_4)\Big)\]
\[ + \Big((x_5-x_1)(x_5-x_2)(x_2-x_1)(x_6-x_3)(x_6-x_4)(x_4-x_3) \Big)\Big((y_3-y_1)(y_4-y_2)(y_6-y_5)\Big)\]
\[ + \Big((x_4-x_1)(x_4-x_3)(x_3-x_1)(x_6-x_2)(x_6-x_5)(x_5-x_2) \Big)\Big((y_2-y_1)(y_5-y_3)(y_6-y_4)\Big)\]
\[ - \Big((x_5-x_1)(x_5-x_3)(x_3-x_1)(x_6-x_2)(x_6-x_4)(x_4-x_2) \Big)\Big((y_2-y_1)(y_4-y_3)(y_6-y_5)\Big)\]
This formula has the consequence that $V_{3,2}(x,y)$ factorises under  mild conditions on $x_i,y_i$. For instance, if $y_1=y_2$ and $x_6=x_3$ then all terms vanish except the first and $V_{3,2}(x,y)$ completely factorises as a product of linear factors.

Thus we may deduce some simple sufficient geometric conditions under which a configuration of six points in the plane $p_i=(x_i,y_i)$ for $1\leq i \leq 6$ may be interpolated by a polynomial of the form $f(x,y) = c_0 + c_1 x + c_2 y + c_3 x^2 + c_4 xy + c_5 x^2 y$ since this is equivalent to the vanishing of $\det\, V_{3,2}(x,y)$.  For example, if  $p_1,p_2$ have the same $y$ coordinate and $p_3,p_4, p_6$ the same $x$-coordinate. One may generate an unlimited number of increasingly baroque examples of this kind. 
\end{example}

\section{Homogeneous Vandermonde determinants}
Theorem \ref{thm: homogeneous} follows from theorems \ref{thm: detAB} or \ref{thm: detABSymmetric} by noting that the homogenous Vandermonde matrix  \eqref{HomVandermonde} is a submatrix of the rectangular version \eqref{defn: VNmatrix}. Its determinant can therefore be deduced from that of $\det (A \star B)$ by taking coefficients (or partial derivatives) with respect to a certain set of $a_{ij}, b_{ij}$ and finally specialising $a_{ij}, b_{ij}$ as in the previous section. Rather than writing down a general formula we illustrate with two different ways of computing \eqref{Vhomconic}.

First note that the determinant in question may be computed indirectly  by  first considering the amalgam   of the following two $9 \times 3$ matrices:
\[ A =  \begin{pmatrix} a_{1,1} & a_{1,2} & a_{1,3} \\ 
a_{2,1} & a_{2,2} & a_{2,3} \\ 
     \vdots & \vdots & \vdots \\ 
      a_{9,1} & a_{9,2} & a_{9,3} \\ 
        \end{pmatrix} \qquad    B= \begin{pmatrix} b_{1,1} & b_{1,2} & b_{1,3} \\ 
        b_{2,1} & b_{2,2} & b_{2,3} \\ 
     \vdots & \vdots & \vdots \\ 
      b_{9,1} & b_{9,2} & b_{9,3} \\ 
        \end{pmatrix}    \]
which for later reference is 
\[    \begin{pmatrix} a_{1,1} b_{1,1} & a_{1,2} b_{1,1} & a_{1,3} b_{1,1} & 
a_{1,1} b_{1,2} & a_{1,2} b_{1,2} & a_{1,3} b_{1,2} &
a_{1,1} b_{1,3} & a_{1,2} b_{1,3} & a_{1,3} b_{1,3} \\
a_{2,1} b_{2,1} & a_{2,2} b_{2,1} & a_{2,3} b_{2,1} & 
a_{2,1} b_{2,2} & a_{2,2} b_{2,2} & a_{2,3} b_{2,2} &
a_{2,1} b_{2,3} & a_{2,2} b_{2,3} & a_{2,3} b_{2,3} \\
     \vdots &  \vdots&\vdots&\vdots&\vdots&\vdots&\vdots&\vdots&\vdots \\ 
  a_{9,1} b_{9,1} & a_{9,2} b_{9,1} & a_{9,3} b_{9,1} & 
a_{9,1} b_{9,2} & a_{9,2} b_{9,2} & a_{9,3} b_{9,2} &
a_{9,1} b_{9,3} & a_{9,2} b_{9,3} & a_{9,3} b_{9,3} \\
        \end{pmatrix}\]
        
\begin{example} Consider the amalgam $A\star B$ and take the coefficient of the variables
$a_{9,3} ,a_{8,3}, a_{7,2}, b_{7,3} , b_{8,3} , b_{9,2}$, or equivalently, 
compute
\[  \frac{\partial}{\partial a_{9,3}} \frac{\partial}{\partial a_{8,3}}  \frac{\partial}{\partial a_{7,2}}  \frac{\partial}{\partial b_{7,3}}  \frac{\partial}{\partial b_{8,3}}  \frac{\partial}{\partial b_{9,2}}  \det(A\star B) \] 
using theorem \ref{thm: detAB}. 
This amounts to computing the  $6\times 6$ minor of $A \star B$ obtained by deleting rows $7,8,9$ and  columns $6,8,9$ which contain entries  $a_{i,2} b_{i,3}$, $a_{i,3} b_{i,2}$ and $a_{i,3} b_{i,3}$.  Now set $a_{i,1}=1, a_{i,2}=x_i, a_{i,3}=x_i^2$, and 
$b_{i,1}=1, b_{i,2}=y_i, b_{i,3}=y_i^2$. This gives an expression for 
$ \det V_{2,2}^{\mathrm{hom}} (p_1,\ldots, p_6) $
as a sum of 24 factorising terms:
\begin{eqnarray}   V_{125}(x)V_{123}(y) (x_3-x_4) (y_4-y_6) & +  &   V_{125}(x)V_{134}(y)(x_3-x_6)(y_2-y_6)   \nonumber \\
 + V_{126}(x)V_{135}(y)     (x_3-x_4)(y_1-y_2) & +  &   V_{123}(x)V_{124}(y)(x_4-x_5)(y_3-y_6)   \nonumber \\
+ V_{136}(x)V_{124}(y)     (x_2-x_5)(y_3-y_5) & +  &   V_{145}(x)V_{123}(y)(x_2-x_6)(y_4-y_6)   \nonumber \\
+   \hbox{  18 similar terms, } \nonumber 
\end{eqnarray} 
where  \[ V_{ijk}(z) = \det \begin{pmatrix} 1 & z_i & z_i^2 \\ 1 & z_j& z_j^2 \\ 1 & z_k & z_k^2 \end{pmatrix} = ( z_j-z_i )(z_k-z_j)(z_k-z_i) \ .\]
On specialising $x_2=x_1$ the formula collapses down to the eight terms listed in example \ref{Ex: Pascalseparate}.
\end{example}

\begin{example} \label{example: Pascalconic} Our determinant formula for an amalgam can be used in a different manner. We illustrate again with the  simple example
\ref{example: Pascal}.  Consider the amalgam $A \star B$ as above, but this time  take the coefficient of  $a_{9,2}, a_{8,1}, a_{7,1}, b_{9,3}, b_{8,3}, b_{7,2}$ in its determinant, or equivalently,  compute
\[  \frac{\partial}{\partial a_{7,1}} \frac{\partial}{\partial a_{8,1}}  \frac{\partial}{\partial a_{9,2}}  \frac{\partial}{\partial b_{7,2}}  \frac{\partial}{\partial b_{8,3}}  \frac{\partial}{\partial b_{9,3}}  \det(A\star B) \] 
which amounts to computing  a certain $6\times 6$ minor of $A \star B$, and 
finally substituting $a_{i,1}=b_{i,1}=1$, $a_{i,2}=b_{i,2}=x_i$, $a_{i,3}=b_{i,3}=y_i$ for $i=1,\ldots, 6$.\footnote{If one performs the substitutions before taking coefficients, the amalgam $A \star B$ has first row 
\[  \begin{pmatrix} 1 & x_1  & y_1 \end{pmatrix}  \otimes  \begin{pmatrix} 1 & x_1  & y_1 \end{pmatrix} =  \begin{pmatrix} 1 & x_1  & y_1 & x_1 & x_1^2 & x_1 y_1 & y_1 & x_1y_1 & y_1^2  \end{pmatrix} \]
whose determinant vanishes because there are 3 repeated columns. The $6 \times 6$ minor of $A \star B$ described above is obtained by striking out the 3 columns $4,7,8$  with repeated entries $x_1, y_1$ and $x_1y_1$, together with the three final rows $7,8,9$. } 
This is by no means the only way of computing the determinant as a minor of an amalgam.  By applying theorem \ref{thm: FirstVDMformula} to compute $\det (A \star B)$ and 
 carrying out this procedure,  we
find  a curious expression for  the determinant of $V(p_1,\ldots, p_6)$  as a sum of  completely factorising terms. It becomes particularly simple if we assume $x_6=y_6=0$, i.e., the point $p_6$ is at the origin and can easily be written down. 

To do so compactly,    let us define 
\[ V_{i,j,k} = \det \begin{pmatrix} 1 & x_i & y_i \\  1 & x_j & y_j \\ 1& x_k & y_k \end{pmatrix}  \ . \] 
This determinant vanishes if and only if $p_i,p_j,p_k$ are colinear. Let us also write 
\[  c_{i,j} =V_{i,j,6}=    \det \begin{pmatrix}  x_i & y_i \\  x_j & y_j  \end{pmatrix} =  x_i y_j - x_j y_i \ . \]
Then we find that the determinant of $V(p_1,\ldots, p_5,0)$ simplifies to  a sum of six  completely factorising terms:  
 \begin{eqnarray}  V(p_1,\ldots, p_5, 0 ) &=&  -  
\det \begin{pmatrix}   x_1 & y_1 & x_1^2 & x_1y_1 & y_1^2  \\ 
 x_2 & y_2 & x_2^2 & x_2y_2 & y_2^2  \\
 x_3 & y_3 & x_3^2 & x_3y_3 & y_3^2  \\
 x_4 & y_4 & x_4^2 & x_4y_4 & y_4^2  \\
 x_5 & y_5 & x_5^2 & x_5y_5 & y_5^2  
\end{pmatrix}  \nonumber \\
 & = & \qquad  V_{1,2,5}  \, c_{1,3} c_{2,4} \, y_5 (x_4-x_3)   +V_{1,2,3} \,    c_{1,4} c_{2,5}  y_3 (x_5-x_4)       \nonumber   \\ 
   &  & +  \quad \,   V_{1,2,3} \, c_{1,2} c_{3,4} y_5 (x_5-x_4)      +   V_{1,3,4}  \, c_{1,2} c_{3,5}  y_4 (x_5-x_2)  \nonumber \\ 
    &  &  -  \quad  \, V_{1,3,5} \, c_{1,2} c_{3,4}   y_5(x_4-x_2)  -   V_{1,2,4}  \, c_{1,3} c_{2,5} y_4 (x_5-x_3) \ .   \nonumber
\end{eqnarray}

\end{example}

This  general technique may be applied to  matrices of Vandermonde type which are of not of the precise form \eqref{defn: VNmatrix}, and give sufficient conditions for a set of points in projective space to be interpolated by  homogeneous polynomials of given degree.

\section{An application to transfinite diameters} \label{sect: applicationtransfinite}

Let $K\subset \C^r$ be a compact subset. One of the many  possible variants of the  notion of  transfinite diameter of $K$ is 
\[ t_r(K) = \lim_{N\rightarrow \infty} \sup_{\z_1,\ldots, \z_{N^r} \in K}  \,  \left| \det V_{(N,\ldots, N)}(\z_1,\ldots, \z_{N^r})\right|^{1/D_N} \]
where 
$D_N =  \frac{r}{2} N^r(N-1)$ is the total degree of the determinant.
The existence of the limit may be established following  \cite{Zaharjuta}.  Note that a more common convention is to use the homogeneous version of the Vandermonde matrix $V_{N,r}^{\mathrm{hom}}$ \eqref{HomVandermonde}.

Consider the following variant. Let $\underline{w} =(w_1,\ldots, w_r)$ denote a vector of positive rational numbers. If the limit exists, we may define
\[  t_{\underline{w}} (K) = \lim_{N\rightarrow \infty} \sup_{\z_1,\ldots, \z_{\ell} \in K}  \,  \left|  \det V_{(w_1 N,\ldots, w_r N)}(\z_1,\ldots, \z_{\ell})\right|^{1/D_{\underline{w} N}}  \]
where $\ell= w_1\ldots w_r N^r$ and $D_{\underline{w}N}$ is the total degree of the determinant\footnote{We take the limit over all $N$ which are divisible by the common denominator of $w_1,\ldots, w_r$ so that all $w_iN$ are integers}. When $\underline{w}=(1,\ldots, 1)$  this reduces to the previous definition, since    $t_{(1,\ldots, 1)}(K) = t_r(K)$.

The following theorem was established a long time ago \cite{SchifferSiciak}  in a mildly different context (see also \cite{BloomCalvi}) using different methods, but is an immediate first application of the formula \eqref{Intro:SecondVDMformula} for the multivariable Vandermonde determinant. 
\begin{thm}
Let $K\subset \C^r$ be equal to a product $K= K_1\times K_2$ of compact subsets of $\C^{k} \times \C^{r-k}$, where $k\geq 0$. 
Write $\underline{w} = ( \underline{w}', \underline{w}'')$ where 
$\underline{w}' = (w_1,\ldots, w_k)$ and  $ \underline{w}'' = (w_{k+1},\ldots, w_r).$ If the limits exist we have
\[   t_{\underline{w}}(K) = t_{\underline{w}'}(K_1)^{|\underline{w}'|/|\underline{w}|} \times t_{\underline{w}''}(K_2)^{|\underline{w}''|/|\underline{w}|}  \]
 where $|\underline{w}| = w_1+ \ldots+ w_r$ and similiarly for $|\underline{w}'|$, $|\underline{w}''|$. 
In particular, 
\[ t_r(K) = t_k(K_1)^{k/r} \times t_{r-k}(K_2)^{(r-k)/r}  \ . \]
\end{thm}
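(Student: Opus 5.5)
Write $N_i=w_iN$, $m=\prod_{i\le k}w_iN$, $n=\prod_{i>k}w_iN$, so $\ell=mn$. The proof consists of an upper bound from the second formula (Theorem \ref{thm: SecondFormula}), a lower bound from a product configuration, and a check of the exponents.

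\emph{Upper bound.} Take any $\z_1,\ldots,\z_{mn}\in K=K_1\times K_2$, so that $\z'_i\in K_1$ and $\z''_i\in K_2$. Each term in \eqref{Intro:SecondVDMformula} is a product of $n$ determinants $\det V_{\underline{w}'N}$ evaluated at $m$ points of $K_1$, and $m$ determinants $\det V_{\underline{w}''N}$ evaluated at $n$ points of $K_2$. Writing $M_1=\sup_{K_1}|\det V_{\underline{w}'N}|$ and $M_2=\sup_{K_2}|\det V_{\underline{w}''N}|$, the triangle inequality gives
\[ \sup_K |\det V_{\underline{w}N}| \le \frac{(mn)!}{H_{m,n}}\, M_1^{\,n} M_2^{\,m}. \]

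\emph{Lower bound.} Choose near-extremal points $x_1,\ldots,x_m\in K_1$ and $y_1,\ldots,y_n\in K_2$, and take the grid $\z=(x_a,y_b)$. Then $A$ and $B$ have the form \eqref{KroneckerCase} (after reordering rows, which only changes the sign). Hence $\det(A\star B)=\det(C\otimes D)$, with $|\det V_{\underline{w}N}(\z)|=|\det C|^n|\det D|^m$. This gives $\sup_K|\det V_{\underline{w}N}|\ge M_1^nM_2^m$.

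\emph{Exponents.} The monomials in $E_{\underline{w}N}$ have total degree $D_{\underline{w}N}=\sum_i mn(N_i-1)/2$. Similarly $D_{\underline{w}'N}=m\sum_{i\le k}(N_i-1)/2$, so $nD_{\underline{w}'N}+mD_{\underline{w}''N}=D_{\underline{w}N}$, and
\[ \frac{nD_{\underline{w}'N}}{D_{\underline{w}N}}=\frac{\sum_{i\le k}(w_iN-1)}{\sum_i(w_iN-1)}\longrightarrow\frac{|\underline{w}'|}{|\underline{w}|}. \]
Combining the two bounds, we get
\[ \sup_K|\det V_{\underline{w}N}|^{1/D_{\underline{w}N}} = c_N\,\bigl(M_1^{1/D_{\underline{w}'N}}\bigr)^{nD_{\underline{w}'N}/D_{\underline{w}N}}\bigl(M_2^{1/D_{\underline{w}''N}}\bigr)^{mD_{\underline{w}''N}/D_{\underline{w}N}}, \]
with $1\le c_N\le ((mn)!/H_{m,n})^{1/D_{\underline{w}N}}$. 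Assuming the limits exist, taking $N\to\infty$ yields the claimed formula. The case $k=0$ (or $k=r$) is trivial.

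\emph{Main obstacle.} The crux is showing $c_N\to1$. We have $\log((mn)!)\le mn\log(mn)=O(N^r\log N)$, while $D_{\underline{w}N}\asymp N^{r+1}$. Also $H_{m,n}\ge1$. Therefore $\bigl((mn)!/H_{m,n}\bigr)^{1/D_{\underline{w}N}}\le \exp\bigl(O(\log N/N)\bigr)\to1$. The only other care needed is when $M_1$ or $M_2$ is $0$ (for instance, $K_i$ pluripolar or finite). In that case both sides vanish, and the statement is read with the convention $0^a=0$.
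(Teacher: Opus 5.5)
Your proposal is correct and follows essentially the same route as the paper. The upper bound comes from Theorem \ref{thm: SecondFormula} via the triangle inequality with constant $(mn)!/H_{m,n}$, and the lower bound from the grid configuration, which makes the matrix a Kronecker product; the exponents match through $D_{\underline{w}N}=nD_{\underline{w}'N}+mD_{\underline{w}''N}$. Your estimate $\log (mn)!\le mn\log(mn)$ stands in for the paper's appeal to Stirling's formula and is enough to show that the constant tends to $1$.
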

\begin{proof} We may assume that $N_i=w_i N $ is an integer for all $i$.  Let $m=  N_1\ldots N_k$ and $n=  N_{k+1}\ldots N_r$. Let us denote by 
\begin{equation}
D_{(N_1,\ldots ,N_r)}  =      \deg (\det V_{(N_1,\ldots,  N_r)}(\z_1,\ldots, 
\z_{mn}) \ . 
\end{equation}
Then we have 
\begin{eqnarray}  D_{(N_1,\ldots ,N_r)}  &=  & \sum_{ 0 \leq i_1\leq  N_1-1, \ldots,   0 \leq i_r\leq N_r-1} i_1+ \ldots +i_r   \\
& =&  \frac{N_1 N_2 \ldots N_r}{2} ( N_1+ N_2 + \ldots +  N_r - r )  \nonumber
\end{eqnarray} 
from which it follows that 
\[  D_{(N_1,\ldots ,N_r) }  = n  D_{(N_1,\ldots ,N_k) }   +   m D_{(N_{k+1},\ldots ,N_r)}  \]
which is consistent with theorems \ref{thm: FirstVDMformula} and \ref{thm: SecondFormula}.

 By theorem  \ref{thm: SecondFormula}, we have
\[  \left| \sup_{\z_i \in K} V_{(N_1,\ldots, N_r)} (\z_i) \right| \leq \frac{(mn)!}{H_{m,n}} \left|  \sup_{\z'_i \in K_1} V_{(N_1,\ldots,N_k)}(z'_i) \right|^{n} \left|  \sup_{\z''_j \in K_2} V_{( N_{k+1},\ldots, N_r)} (\z''_j)\right|^{m}   \ .\]
Let us write 
\[ \alpha  = \frac{ N_1 + \ldots +  N_k - k } {N_1+ \ldots +N_r- r} \quad \hbox{ and } \quad \beta= \frac{  N_{k+1} + \ldots + N_r -(r-k)} {N_1+ \ldots +N_r-r}  \]
where $\alpha+ \beta =1$. Then
\begin{multline*}   \left| \sup_{\z_i \in K} V_{( N_1,\ldots, N_r)} (\z_i) \right|^{1/D_{(N_1,\ldots, N_r)}} \leq  \left( \frac{(mn)!}{H_{m,n}}\right)^{1/D_{(N_1,\ldots, N_r)}}\\
 \times  \left|  \sup_{\z'_i \in K_1} V_{(N_1,\ldots, N_k)}(\z'_i) \right|^{\alpha / D_{(N_1,\ldots, N_k) }} \left|  \sup_{\z''_j \in K_2} V_{(N_{k+1},\ldots,  N_r)} (\z''_j)\right|^{\beta / D_{(N_{k+1},\ldots, N_{r})}} \end{multline*}
 It  follows from  Stirling's formula that
  \[   \left( \frac{(mn)!}{H_{m,n}}\right)^{1/D_{(N_1,\ldots, N_r) }}  = \left( \left(\frac{(mn)!}{H_{m,n}}\right)^{1/mn} \right)^{2/{(N_1+\ldots+ N_r-r)}} \To 1\]
 as $N\rightarrow \infty$, since    $N_i = w_i N$.  Therefore, by letting $N\rightarrow \infty$ we deduce that 
  \[  t_{\underline{w}}(K) \leq  t_{\underline{w}'}(K_1)^{|\underline{w}'|/|\underline{w}|} \times t_{\underline{w}''}(K_2)^{|\underline{w}''|/|\underline{w}|} \]
since $\alpha \rightarrow   (w_1+\ldots +w_k)/(w_1+\ldots +w_r)  $ and $\beta \rightarrow (w_{k+1}+\ldots +w_r)/(w_1+\ldots +w_r)$.

To prove a lower bound,  note that since $K_1, K_2$ are compact, there exists a configuration of points $\z'_1,\ldots, \z'_{m} \in K_1$ and $\z''_1,\ldots, \z''_n \in K_2$ such that 
the respective multivariable Vandermonde determinants are maximized.  
Define a configuration of $mn$ points in $K$ as follows. For every  $1\leq k \leq mn$ write
\[ k= i+ (j-1)  m  \]
for unique  $1\leq  i \leq m$ and $1\leq j\leq n$. Set 
\[ \z_k = (\z'_i, \z''_j)\]
and observe that with these choices, 
\[ V_{(N_1,\ldots, N_r)}) (\z_1,\ldots, \z_{mn}) =  V_{(N_1,\ldots, N_k)} (\z'_1,\ldots \z'_m) \otimes  V_{(N_{k+1},\ldots, N_r)} (\z''_1,\ldots \z''_n) \]
is a Kronecker tensor product (\S \ref{KroneckerCase}). Thus 
\[ \det( V_{(N_1,\ldots, N_r)} (\z_k )  )=  \det( V_{(N_1,\ldots, N_k)} (\z'_i))^{n}  \times \det( V_{(N_{k+1},\ldots, N_r)} (\z''_j))^m  \ .  \]
Now let $N_i = w_i N$ be  integers.   Since the points $\z'_i$  and $\z''_j$ may be chosen independently, we may take the limit as $N\rightarrow \infty$ to conclude that
\[   t_{\underline{w}} (K) \geq  t_{\underline{w}'} (K_1)^{|\underline{w}'|/|\underline{w}|} \times t_{{\underline{w}''}}(K_2)^{|\underline{w}''|/|\underline{w}|}\ .  \]
\end{proof}


\bibliographystyle{alpha}

\bibliography{biblio}

  \end{document}